\theoremstyle{plain}
\newtheorem{thm}{Theorem}[section]
\newtheorem{cor}[thm]{Corollary}
\newtheorem{lem}[thm]{Lemma}
\newtheorem{prop}[thm]{Proposition}
\newtheorem{ques}[thm]{Question}
\newtheorem{conj}[thm]{Conjecture}
\theoremstyle{definition}
\newtheorem{exam}[thm]{Example}
\newtheorem{df}[thm]{Definition}
\newtheorem{rem}[thm]{Remark}
\def\cal{\mathcal}
\def\op{\operatorname}
\renewcommand{\phi}{\varphi}
\newcommand{\N}{\mathbb{N}}
\newcommand{\Z}{\mathbb{Z}}
\newcommand{\Q}{\mathbb{Q}}
\newcommand{\C}{\mathbb{C}}
\let\@@pmod\pmod
\DeclareRobustCommand{\pmod}{\@ifstar\@pmods\@@pmod}
\def\@pmods#1{\mkern4mu({\operator@font mod}\mkern 6mu#1)}
\journal{Journal of Number Theory}
\begin{document}

\begin{frontmatter}

\title{A Fibonacci type sequence with Prouhet-Thue-Morse coefficients}

\author[inst1]{Eryk Lipka}

\affiliation[inst1]{organization={Pedagogical University of Kraków, Institute of
Mathematics},
            addressline={Podchor\k{a}\.zych~2}, 
            city={Krak\'ow},
            postcode={30-084},
            country={Poland}}

\author[inst2]{Maciej Ulas}

\affiliation[inst2]{organization={Jagiellonian University, Faculty of Mathematics and Computer Science, Institute of
Mathematics},
            addressline={{\L}ojasiewicza 6}, 
            city={Krak\'ow},
            postcode={30-348},
            country={Poland}}

\begin{abstract}

Let $t_{n}=(-1)^{s_{2}(n)}$, where $s_{2}(n)$ is the sum of binary digits function. The sequence $(t_{n})_{n\in\N}$ is the well-known Prouhet-Thue-Morse sequence. In this note we initiate the study of the sequence $(h_{n})_{n\in\N}$, where $h_{0}=0, h_{1}=1$ and for $n\geq 2$ we define $h_{n}$ recursively as follows: $h_{n}=t_{n}h_{n-1}+h_{n-2}$. We prove several results concerning arithmetic properties of the sequence $(h_{n})_{n\in\N}$. In particular, we prove non-vanishing of $h_{n}$ for $n\geq 5$, automaticity of the sequence $(h_{n}\pmod{m})_{n\in\N}$ for each $m$, and other results.
\end{abstract}

\begin{keyword}
Automatics sequences \sep Generalized Fibbonacci sequences \sep Linear recurrences
\MSC[2020] 11B39 \sep 11B85 \sep 11B37
\end{keyword}

\end{frontmatter}
\section{Introduction}\label{sec1}

Let $\N$ denote the set of non-negative integers, $\N_{+}$ the set of positive integers and for $k\in\N$ let us write $\N_{\geq k}$ for the set $\{n\in\N:\;n\geq k\}$.

Let $f_{0}=0, f_{1}=1$ and for $n\geq 2$ define $f_{n}=f_{n-1}+f_{n-2}$. The sequence $(f_{n})_{n\in\N}$ is the famous Fibonacci sequence with many interesting properties and diverse applications. Many properties of the sequence $(f_{n})_{n\in\N}$ can be deduced from the Binet formula which says that
$$
f_{n}=\frac{1}{\sqrt{5}}(\phi^{n}-(-\phi)^{-n}),
$$
where $\phi=\frac{1}{2}(1+\sqrt{5})$ is the so-called golden ratio. It is the unique positive solution of the equation $x^2-x-1=0$. In particular the Fibonacci sequence has exponential growth. The number of papers devoted to the Fibonacci sequence is enormous. For comprehensive review of its main properties an interested reader may consult the recent book \cite{Ko}. The concept of the Fibonacci sequence was generalized in various directions. One of possible generalizations is to consider the sequence $r_{n}=\varepsilon_{n}r_{n-1}+r_{n-2}$, where $r_{0}=0, r_{1}=1$ and $\varepsilon_{n}\in\{-1,1\}$ is chosen at random with equal probability $1/2$, independently for different values of $n$. Furstenberg \cite{Fu} proved that random recurrent sequences of this kind have exponential growth almost surely. For the random Fibonacci sequence this result was reproved by Viswanath \cite{V} and a simple proof was presented in the work of Makover and McGowan \cite{MM}. 

From the short discussion above, we see that we have two opposite situations: the Fibonacci sequence which has plethora of interesting arithmetic properties and applications, and its random counterpart which is extremely difficult to investigate. However, in-between these two distant objects one can consider a different Fibonacci-like sequence governed by the recurrence sequence $(r_{n})_{n\in\N}$, where $r_{0}=0, r_{1}=1$ and for $n\geq 2$ we put
$$
r_{n}=a_{n}r_{n-1}+r_{n-2}.
$$
Here ${\bf a}=(a_{n})_{n\geq 2}$ is a sequence with values in the set $\{-1, 1\}$. A first generalization which comes to mind is to consider a sequence ${\bf a}$ which is periodic. The behavior of the related sequence $(r_{n})_{n\in\N}$ was investigated in recent papers of Trojovsk\'{y} \cite{Tr} and  Andeli\'{c} et al. \cite{ADF}, where $a_{n}=(-1)^{\lfloor (n-1)/k\rfloor}$ and $k$ is given. In particular, the connection with determinants of certain tri-diagonal matrices was presented and some compact formulas in terms of the Fibonacci sequence were obtained. Moreover, the study of McLellan \cite{Mc} shows that in this case the sequence $(r_{n})_{n\in\N}$ can be quite precisely analyzed (at least its rate of growth rate). In particular, the mentioned results suggest that the sequence $(r_{n})_{n\in\N}$ with periodic sequence ${\bf a}$ is often closely related to the original Fibonacci sequence; it is periodic or has a linear growth. Thus, instead of trying to obtain new results in periodic case we decided to analyze what is going on when the sequence ${\bf a}$ is not periodic and it is not random. The class of such sequences is broad and contains a subclass of {\it automatic sequences}. Let us recall that a sequence ${\bf a}$ is automatic if it can be realized by a finite state machine---so called {\it finite automaton with output}. The class of automatic sequences was introduced and studied by Cobham in 1972. In arithmetic terms, for given $k\in\N_{\geq 2}$ we say that a sequence ${\bf a}$ is $k$-automatic if the set
$$
K_{k}({\bf a})=\{(a_{k^{i}n+j})_{n\in\N}:\;i\in\N \;\mbox{and}\; 0\leq j<k^{i}\},
$$
called the $k$-kernel of ${\bf a}$, is finite. We say that a sequence is automatic if it is $k$-automatic for some $k$. The class of automatic sequences is interesting due to the fact that it can be located between the class of periodic sequences and the class of random (or maybe we should say difficult) sequences. In particular periodic sequences are automatic. Basic (and much more) results concerning automatic sequences can be found in an excellent book of Allouche and Shallit \cite{AS2}.

In our investigations we are interested in one particular interesting 2-automatic sequence, i.e., the Prouhet-Thue-Morse sequence (the PTM sequence for short). The PTM sequence ${\bf t}=(t_{n})_{n\in\N}$ is simply defined as follows: $t_{n}=(-1)^{s_{2}(n)}$, where $s_{2}(n)$ is the sum of binary digits function, i.e., the number of 1's in unique binary expansion of $n$. One can also check that
$$
t_{0}=1,\quad t_{1}=-1,\quad t_{2n}=t_{n},\quad t_{2n+1}=-t_{n}.
$$
Using the above recurrences it is easy to check that the 2-kernel of ${\bf t}$ has the form $K_{2}({\bf t})=\{{\bf t}, -{\bf t}\}$. The PTM sequence has diverse applications in combinatorics, number theory, mathematical analysis and even physics. The ubiquitousness of the PTM sequence is beautifully presented in \cite{AS1}.

Motivated by investigations on random Fibonacci sequences we are interested in the arithmetic properties of the sequence ${\bf h}=(h_{n})_{n\in\N}$, where
$$
h_{0}=0, \quad h_{1}=1, \quad h_{n}=t_{n}h_{n-1}+h_{n-2}.
$$
The first 20 terms of the sequence ${\bf h}$ are the following:
$$
0,1,-1,0,-1,-1,-2,1,-3,-2,-5,3,-2,5,-7,-2,-5,-7,-12,5,\ldots .
$$
We believe that the good understanding of this sequence will give an impetus to study the general class of Fibonacci type sequences with automatic coefficients.

Let us describe the content of the paper in some details. In Section \ref{sec2} we investigate the behavior of the sequence of signs of the sequence ${\bf h}$. In particular, we prove that it is 2-automatic and not periodic. As an application we obtain results concerning the maximal length of the subsequences of consecutive values of ${\bf h}$ which are increasing and decreasing.

In Section \ref{sec3} we consider the sequence ${\bf h}_{m}=(h_{n}\pmod*{m})_{n\in\N}$ for a given $m\in\N_{\geq 2}$. The main result of this section says that ${\bf h}_{m}$ is a 2-automatic sequence for each $m$. Moreover, as an application, we find that for each $m\in\N_{\geq 2}$ the congruence $h_{n}\equiv 0\pmod*{m}$ has infinitely many solutions in positive integers. We also prove that the ordinary generating function of the sequence ${\bf h}$ is transcendental over $\Q(x)$.

Section \ref{sec4} is devoted to the presentation of certain identities involving terms of the sequence ${\bf h}$. We believe that the most interesting part of this section is the result which says that there are infinitely many elements of the sequence ${\bf h}$ which are sums of two squares. This can be seen as a variation on a classical result concerning the identity $f_{2n+1}=f_{n}^2+f_{n+1}^2$ for Fibonacci numbers. Finally, in the last section we offer questions and conjectures which naturally arised during our investigations. We hope that this set of problems will stimulate further research.

\section{Sign behavior of the sequence $(h_{n})_{n\in\N}$ and consequences}\label{sec2}

In this section we analyze sign behavior of the terms of the sequence ${\bf h}$ and present some basic applications.

\begin{lem}\label{lem:tools}
\begin{enumerate}
\item For each $n\in\N_{+}$ we have $h_{2n+1}=t_{2n+1}h_{2n-2}$.
\item For each $n\in\N_{+}$ such that $t_n \neq t_{n-1}$ we have $h_{4n}=t_{n}h_{4n-3}$.
\item The sequence $(u_{n})_{n\in\N}$ of signs of the sequence $(h_{n})_{n\in\N}$, i.e., $u_{n}=\op{sign}(h_{n})$ is $2$-automatic. More precisely,
$$
u_0=u_{3}=0, \quad u_{2n}=-1\; (n \neq 0), \quad u_{2n+1}=t_{n}\; (n\neq 1).
$$
\end{enumerate}
\end{lem}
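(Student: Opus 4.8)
The plan is to prove the three parts in order, using the first two as the computational backbone for the third. For part (1), I would like to iterate the recurrence twice starting from $h_{2n+1}$. Writing $h_{2n+1}=t_{2n+1}h_{2n}+h_{2n-1}$ and $h_{2n}=t_{2n}h_{2n-1}+h_{2n-2}$, and using the PTM identities $t_{2n+1}=-t_n$, $t_{2n}=t_n$, substitution gives $h_{2n+1}=t_{2n+1}(t_{2n}h_{2n-1}+h_{2n-2})+h_{2n-1}=(t_{2n+1}t_{2n}+1)h_{2n-1}+t_{2n+1}h_{2n-2}$. Since $t_{2n+1}t_{2n}=-t_n^2=-1$, the coefficient of $h_{2n-1}$ vanishes and we are left with $h_{2n+1}=t_{2n+1}h_{2n-2}$. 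This is the crucial collapse; it is exactly the place where the PTM structure does real work (the product of two consecutive values of the coefficient at an even index is $-1$).

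For part (2), I would expand $h_{4n}$ similarly. Write $h_{4n}=t_{4n}h_{4n-1}+h_{4n-2}$, then $h_{4n-1}=t_{4n-1}h_{4n-2}+h_{4n-3}$, so $h_{4n}=(t_{4n}t_{4n-1}+1)h_{4n-2}+t_{4n}h_{4n-3}$. Now $t_{4n}=t_{2n}=t_n$ and $t_{4n-1}=t_{4(n-1)+3}=-t_{2(n-1)+1}=t_{n-1}$, so the coefficient of $h_{4n-2}$ is $t_nt_{n-1}+1$, which vanishes precisely under the hypothesis $t_n\neq t_{n-1}$ (then $t_nt_{n-1}=-1$). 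This yields $h_{4n}=t_{4n}h_{4n-3}=t_nh_{4n-3}$ as claimed. Again the only subtlety is bookkeeping the PTM recurrences on the relevant residues mod $4$.

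For part (3), the strategy is to establish the stated closed forms $u_0=u_3=0$, $u_{2n}=-1$ for $n\neq 0$, and $u_{2n+1}=t_n$ for $n\neq 1$ by a single induction on $n$, using parts (1) and (2) to reduce each case to an earlier one; once these formulas hold, $2$-automaticity is immediate because $\op{sign}$ of each $h_n$ depends only on the residue class of $n$ mod a fixed power of $2$ together with a finite amount of PTM data, and the PTM sequence is itself $2$-automatic (so the $2$-kernel of $(u_n)$ is finite — one can exhibit it explicitly). Concretely, part (1) gives $\op{sign}(h_{2n+1})=t_{2n+1}\op{sign}(h_{2n-2})=-t_n\cdot(-1)=t_n$ for $n\geq 2$, handling the odd indices (with $n=0,1$ checked by hand: $h_1=1$, $h_3=0$); the even-index claim $\op{sign}(h_{2n})=-1$ requires showing $h_{2n}<0$ for all $n\geq1$, which I expect to be the main obstacle. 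The natural approach is to split $h_{2n}=t_{2n}h_{2n-1}+h_{2n-2}=t_nh_{2n-1}+h_{2n-2}$ and argue by cases on the sign of $t_n$ and on $n\bmod 4$, invoking part (2) when $t_n\neq t_{n-1}$ to relate $h_{4n}$ back to $h_{4n-3}$ (an odd index, whose sign is already known to be $t_n$), and handling the complementary case $t_n=t_{n-1}$ separately; in every branch one needs the inductive hypotheses on $h_{2n-1},h_{2n-2}$ together with the observation that when the two summands have opposite signs, the dominant one is forced by a further unfolding of the recurrence. Carrying out this case analysis cleanly — making sure the PTM constraints (e.g. one cannot have $t_{n-1}=t_n=t_{n+1}$ forced in a way that breaks the argument, and transitions like $t_{2n}=t_n$, $t_{2n+1}=-t_n$ are tracked correctly) rule out the "ambiguous sign" situations — is the technical heart of the proof; everything else is routine.
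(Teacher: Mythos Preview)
Your arguments for parts (1) and (2) are correct and coincide exactly with the paper's proof.

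For part (3), your treatment of the odd indices is also the paper's: once $h_{2n-2}<0$ is known, part~(1) gives $u_{2n+1}=t_{2n+1}\cdot(-1)=t_n$. (One slip: you write that $\op{sign}(h_{4n-3})$ is $t_n$; in fact $4n-3=2(2n-2)+1$, so its sign is $t_{2n-2}=t_{n-1}$. Under the hypothesis $t_n\neq t_{n-1}$ you still get $t_n\cdot t_{n-1}=-1$, so the conclusion survives.)

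Where you diverge from the paper is in the even-index step. You propose a case split on $t_n$ versus $t_{n-1}$, invoking part~(2) when they differ and promising a separate (unspecified) argument when they agree. The paper avoids this entirely and never uses part~(2) here: applying part~(1) to $h_{4n-1}$ and $h_{4n-3}$ yields
\[
h_{4n}=(1+t_{4n}t_{4n-1})\,h_{4n-4}+t_{4n-2}t_{4n-3}\,h_{4n-6},
\]
and the PTM identities force $t_{4n-2}t_{4n-3}=1$ unconditionally, so $h_{4n}\le(1+t_{4n}t_{4n-1})h_{4n-4}+h_{4n-6}<0$ with no case analysis. Likewise $h_{4n+2}=t_{4n+2}t_{4n+1}h_{4n-2}+h_{4n}=h_{4n-2}+h_{4n}<0$ since $t_{4n+2}t_{4n+1}=(-t_n)(-t_n)=1$. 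Your route would work, but it turns a two-line computation into a branching argument whose ``complementary case'' you have not yet filled in; the paper's uniform identity is both shorter and removes the need to verify that the branches exhaust all possibilities.
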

\begin{proof}
(1) Using known relations $t_{2n} = t_n, t_{2n+1} = -t_n$ we obtain
\begin{align*}
  h_{2n+1} &= t_{2n+1}h_{2n}+h_{2n-1} \\
  &=t_{2n+1}t_{2n}h_{2n-1} + t_{2n+1}h_{2n-2}+h_{2n-1}\\
  &=-t_nt_nh_{2n-1} + h_{2n-1} +t_{2n+1}h_{2n-2}\\
  &=h_{2n-1}(1-t_nt_n) +t_{2n+1}h_{2n-2}\\
  &=t_{2n+1}h_{2n-2}.
\end{align*}
(2) Using a similar reasoning we get
\begin{align*}
  h_{4n} &= t_{4n}h_{4n-1}+h_{4n-2} \\
  &=t_{n}t_{4n-1}h_{4n-2} + t_{n}h_{4n-3}+h_{4n-2}\\
  &=t_nt_{n-1}h_{4n-2} + t_nh_{4n-3} +h_{4n-2}\\
  &=h_{4n-2}(1+t_nt_{n-1}) +t_{n}h_{4n-3}\\
  &=t_{n}h_{4n-3}.
\end{align*}
(3) By a simple calculation, the statement of our lemma holds for $n\le 6$. Assume $h_{2N} < 0$ holds for some $N\in\N_+, N<2n$. Then
$$h_{4n} = t_{4n}h_{4n-1} + h_{4n-2}=t_{4n}h_{4n-1} + t_{4n-2}h_{4n-3} + h_{4n-4}.$$
By (1) $h_{4n}=(1+t_{4n}t_{4n-1})h_{4n-4} + t_{4n-2}t_{4n-3}h_{4n-6}$, after noticing that $t_{4n-3} = -t_{2n-2}=-t_{n-1}=t_{2n-1}=t_{4n-2}$ we obtain
$$h_{4n} < h_{4n-4}(t_{4n}t_{4n-1} + 1)\le 0.$$
Similarly
$$h_{4n+2}=t_{4n+2}h_{4n+1}+h_{4n}=t_{4n+2}t_{4n+1}h_{4n-2}+h_{4n}=h_{4n-2}+h_{4n} < 0.$$
By induction $h_{2n} < 0$ for $n\in \N_+$. For odd $n$ we have
\begin{align*}
  h_{2n+1}&=t_{2n+1}h_{2n}+h_{2n-1}\\
  &=t_{2n+1}t_{2n}h_{2n-1}+t_{2n+1}h_{2n-2}+h_{2n-1}\\
  &= h_{2n-1}(-t_nt_n+1) -t_nh_{2n-2} \\
  &= -t_nh_{2n-2},
\end{align*}
so $u_{2n+1} = t_n$ for $n>1$. Using the fact that $K_2({\bf t}) = \{{\bf t}, -{\bf t}\}$, it is easy to verify that the 2-kernel of $(u_n)_{n\in \N}$ is also finite (in fact, it has seven elements).
\end{proof}

\begin{cor}
Let $u_{n}=\op{sign}(h_{n})$ and write $U(x)=\sum_{n=0}^{\infty}u_{n}x^{n}$. Then
$$
U(x)=-x^2-x^3-\frac{x^{4}}{1-x^2}+x\prod_{n=0}^{\infty}\left(1-x^{2^{n+1}}\right).
$$
In particular, the series $U$ is transcendental over $\Q(x)$.
\end{cor}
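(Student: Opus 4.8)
The plan is to evaluate $U$ by splitting the sum according to the parity of the index, which collapses it to the generating function of the PTM sequence, and then to deduce transcendence from the resulting lacunary product. First I would invoke the explicit description of the signs from Lemma~\ref{lem:tools}(3): $u_0=u_3=0$, $u_{2n}=-1$ for all $n\ge 1$, and $u_{2n+1}=t_n$ for all $n\neq 1$. Since $t_0=1=u_1$, the only genuine exception among the odd indices is $n=1$, where $u_3=0$ while $t_1=-1$. Writing
$$
U(x)=\sum_{n\ge 0}u_{2n}x^{2n}+\sum_{n\ge 0}u_{2n+1}x^{2n+1},
$$
the even part is a geometric series equal to $-x^2/(1-x^2)$, and the odd part equals $\sum_{n\ge 0}t_nx^{2n+1}$ corrected at the single term $n=1$, i.e. $xT(x^2)+(u_3-t_1)x^3$, where $T(x)=\sum_{n\ge 0}t_nx^n$. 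From $t_{2n}=t_n$ and $t_{2n+1}=-t_n$ one has the functional equation $T(x)=(1-x)T(x^2)$, whose iteration gives the classical product $T(x)=\prod_{n\ge 0}(1-x^{2^n})$, hence $T(x^2)=\prod_{n\ge 0}(1-x^{2^{n+1}})$. Substituting and regrouping the rational part (using $-x^2/(1-x^2)=-x^2-x^4/(1-x^2)$) yields the asserted closed form.

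For transcendence, note that since $u_n\in\{-1,0,1\}$ the series $U$ converges for $|x|<1$, and it is not a polynomial (for instance $u_{2^k}=-1$ for every $k\ge 1$), so its radius of convergence is exactly $1$. By a theorem of Fatou, an integer power series convergent in the open unit disc is either a rational function or transcendental over $\Q(x)$, so it suffices to rule out rationality. If $U=P/Q$ with $P,Q\in\Z[x]$, then for all large $n$ the $u_n$ satisfy one fixed linear recurrence with constant coefficients, of some order $d$; since each $u_n$ lies in the three-element set $\{-1,0,1\}$, the state tuples $(u_n,\dots,u_{n+d-1})$ take finitely many values and hence eventually repeat, which forces $(u_n)_{n\in\N}$ to be eventually periodic. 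But $u_{2n+1}=t_n$ for all $n\ge 2$, so this would make the PTM sequence eventually periodic, contradicting its well-known non-ultimate-periodicity. Hence $U$ is transcendental over $\Q(x)$.

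The closed-form computation is routine bookkeeping; the only point needing care is the single exceptional index $n=1$ in the odd part. The substantive step is the transcendence argument, and inside it the implication ``rational generating function with bounded integer coefficients $\Rightarrow$ eventually periodic coefficients'', which is what lets one bring to bear the non-periodicity of ${\bf t}$. If one prefers a named result to Fatou's theorem, an equivalent route is to observe that $x\prod_{n\ge 0}(1-x^{2^{n+1}})=xT(x^2)$ differs from $U(x)$ by an element of $\Q(x)$, so algebraicity of $U$ over $\Q(x)$ would entail algebraicity of $T$ over $\Q(x)$, contradicting the classical transcendence of the Prouhet--Thue--Morse generating function.
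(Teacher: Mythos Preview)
Your argument matches the paper's: split $U$ by parity via Lemma~\ref{lem:tools}(3), identify the odd part with $xT(x^2)$ up to the single exceptional index, and deduce transcendence from that of $T$ (your final paragraph is exactly the paper's reasoning, in fact stated a bit more cleanly for algebraicity rather than merely rationality). One caveat: your own computation actually produces $+x^3$, not $-x^3$, since $(u_3-t_1)x^3=(0-(-1))x^3=+x^3$; the minus sign in the displayed formula is a typo in the statement (check the $x^3$ coefficient directly: the stated expression gives $-2$, while $u_3=0$), so you should flag the discrepancy rather than assert you recovered ``the asserted closed form''. Your Fatou-plus-periodicity route for transcendence is a legitimate alternative not used in the paper; it trades the cited transcendence of $T$ for the more elementary non-ultimate-periodicity of $(t_n)$, which is a nice self-contained option.
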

\begin{proof}
It is well known that $T(x)=\sum_{n=0}^{\infty}t_{n}x^{n}=\prod_{n=0}^{\infty}(1-x^{2^{n}})$. The series $T$ satisfies a Mahler type functional equation $T(x)=(1-x)T(x^2)$ and it is known that $T$ is transcendental over $\Q(x)$. A simple calculation with the closed form of $u_{n}$ presented in Lemma \ref{lem:tools}(3) reveals the form of $U$. Transcendentality of the series $U$ is a simple consequence of transcendentality of $T$. Indeed, if $U$ were a rational function then
$$
\frac{1}{x}\left(U(x)+x^2+x^3+\frac{x^{4}}{1-x^2}\right)=T(x^2)=\frac{T(x)}{1-x}
$$
would be a rational function too. Thus, $T$ would be rational a function---a~contradiction.
\end{proof}

As a next consequence of our findings we present the following:

\begin{cor}
\begin{enumerate}
\item The set
$$
\{n\in\N:\;h_{n+i}<0\;\mbox{for}\;i=0, 1, 2, 3, 4\}
$$
is infinite.
\item There is no $n\in\N$ satisfying $h_{n+i}<0$ for $i=0, \ldots, 5$.
\item There is no $n\in\N$ satisfying $h_{n+i}>0$ for $i=0, 1, 2$.
\end{enumerate}
\end{cor}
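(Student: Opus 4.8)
The plan is to reduce everything to the explicit description of the sign sequence from Lemma~\ref{lem:tools}(3), namely $u_{0}=u_{3}=0$, $u_{2n}=-1$ for $n\geq 1$, and $u_{2n+1}=t_{n}$ for $n\neq 1$, together with two elementary facts about the PTM sequence ${\bf t}$ that I would record first: (a) no three consecutive terms of ${\bf t}$ are equal, and (b) there are infinitely many $k$ with $t_{k}=t_{k+1}=-1$. Fact (a) holds because among any three consecutive integers one finds a pair $\{2m,2m+1\}$, and $t_{2m}=t_{m}=-t_{2m+1}$, so ${\bf t}$ cannot be constant there. Fact (b) follows by taking $k=2^{2j+1}-1$, so that $s_{2}(k)=2j+1$ is odd while $s_{2}(k+1)=1$, giving $t_{k}=(-1)^{2j+1}=-1=t_{k+1}$ (and $k\geq 7$ once $j\geq 1$).

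For part (1) I would, given such a $k$, take $n=2k$: since the indices $2k,\dots,2k+4$ then avoid the exceptional arguments $0,1,3$, the closed form gives $u_{2k}=u_{2k+2}=u_{2k+4}=-1$ together with $u_{2k+1}=t_{k}=-1$ and $u_{2k+3}=t_{k+1}=-1$, so all five signs are negative; letting $j\to\infty$ produces infinitely many admissible $n$.

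For part (2) I would argue by contradiction: if $h_{n+i}<0$ for $i=0,\dots,5$, then since $u_{0}=u_{3}=0$ and $u_{1}=1>0$ the window $\{n,\dots,n+5\}$ must avoid $0,1,3$, forcing $n\geq 4$; that window contains three consecutive odd indices $2a+1,2a+3,2a+5$ with $a\geq 2$, and then $u_{2a+1}=t_{a}$, $u_{2a+3}=t_{a+1}$, $u_{2a+5}=t_{a+2}$ are all $-1$, contradicting fact (a). Part (3) is immediate: any window $\{n,n+1,n+2\}$ contains an even index $2k$, and $u_{2k}\leq 0$ (it is $0$ if $k=0$ and $-1$ if $k\geq 1$), so the three signs cannot all be positive.

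There is no serious obstacle here; the corollary is a short deduction from Lemma~\ref{lem:tools}(3). The only points that need to be stated with a little care are fact (a) — one must note that of two consecutive integers one is $2m$ and the other $2m+1$, so ${\bf t}$ takes opposite values on them — and the attendant bookkeeping about the exceptional indices $u_{0},u_{1},u_{3}$ and the parities of the index windows.
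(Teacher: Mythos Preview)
Your proof is correct and follows essentially the same approach as the paper's: both reduce all three parts to Lemma~\ref{lem:tools}(3) together with the facts that ${\bf t}$ has no three consecutive equal terms and that $t_{k}=t_{k+1}=-1$ infinitely often. You simply supply explicit justifications (the construction $k=2^{2j+1}-1$ and the $\{2m,2m+1\}$ argument for cube-freeness) and careful handling of the exceptional indices $0,1,3$ that the paper leaves implicit.
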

\begin{proof}
The first statement is a simple consequence of the existence of infinitely many $n\in\N$ such that $u_{2n+1}=u_{2n+3}=-1$, i.e., $t_{n}=t_{n+1}=-1$ and the fact that $u_{2n}=u_{2n+2}=u_{2n+4}=-1$.

Because there are no three consecutive equal values in the PTM sequence, there is no $n$ such that $h_{n+i}<0$ for $i=0, \ldots, 5$.

The last statement is clear from the sign behavior of the sequence $(h_{n})_{n\in\N}$.
\end{proof}

\begin{prop}
\begin{enumerate}
\item The set
$$
\{n\in\N:\;h_{n}<h_{n+1}<h_{n+2}\}
$$
is infinite.

\item There is no $n\in\N$ satisfying $h_{n}<h_{n+1}<h_{n+2}<h_{n+3}$.

\item The set
$$
\{n\in\N:\;h_{n}>h_{n+1}>h_{n+2}>h_{n+3}\}
$$
is infinite.

\item There is no $n\in\N$ satisfying $h_{n}>h_{n+1}>h_{n+2}>h_{n+3}>h_{n+4}$.
\end{enumerate}
\end{prop}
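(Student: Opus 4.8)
The plan is to reduce the whole proposition to two elementary facts about ${\bf h}$. The first is the \emph{skip identity} $h_{n+2}-h_n=t_{n+2}h_{n+1}$, immediate from the recurrence, which gives $\operatorname{sign}(h_{n+2}-h_n)=t_{n+2}u_{n+1}$ whenever $h_{n+1}\neq 0$, where $u_n=\operatorname{sign}(h_n)$ is the sequence described explicitly in Lemma \ref{lem:tools}(3). The second concerns the even-indexed part: put $G_k=-h_{2k}=|h_{2k}|$, positive for $k\ge 1$ by Lemma \ref{lem:tools}(3), and $v_k=t_kt_{k+1}\in\{-1,1\}$. Since $h_{2k+1}=-t_kh_{2k-2}$ by Lemma \ref{lem:tools}(1), substituting into $h_{2k+2}=t_{2k+2}h_{2k+1}+h_{2k}$ yields $G_{k+1}=G_k-v_kG_{k-1}$ for $k\ge 1$, with $G_1=G_2=1$; in particular $G_{k+1}=G_k+G_{k-1}$ when $v_k=-1$ and $G_{k+1}=G_k-G_{k-1}$ when $v_k=1$.

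For parts (2) and (4) I would argue by contradiction and split on the parity of $n$; after checking the few small $n$ by hand one may assume $n$ is large, so that $u_{2k}=-1$ and $u_{2k+1}=t_k$ hold for all indices involved. If $h_n<h_{n+1}<h_{n+2}<h_{n+3}$, then applying the skip identity to $(n,n+2)$ and $(n+1,n+3)$ forces $t_{n+2}u_{n+1}=t_{n+3}u_{n+2}=1$; substituting the values of $u$ this pins two consecutive letters of ${\bf t}$. For $n=2k$ one gets $t_k=t_{k+1}=1$, hence $u_{2k+1}=t_k=1$, so $h_{2k+1}>0>h_{2k+2}$, contradicting $h_{2k+1}<h_{2k+2}$; for $n=2k+1$ one gets $t_{k+1}=1$, hence $h_{2k+3}>0>h_{2k+4}$, the same contradiction. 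Part (4) is identical in spirit: from $h_n>h_{n+1}>h_{n+2}>h_{n+3}>h_{n+4}$ the skip identity on $(n,n+2),(n+1,n+3),(n+2,n+4)$ gives three sign conditions, which for $n=2k$ force $t_{k+1}=-1$ and $t_k=1$, hence $h_{2k+1}>0>h_{2k}$, contradicting $h_{2k}>h_{2k+1}$, and for $n=2k+1$ force $t_{k+1}=-1$, $t_{k+2}=1$ and $t_{k+2}=-1$ simultaneously, an outright contradiction. These are just routine sign bookkeeping.

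For parts (1) and (3) the strategy is to isolate one short factor of the Prouhet--Thue--Morse word which, wherever it occurs far enough out, forces the required monotone block, and then to invoke uniform recurrence of ${\bf t}$ (as a fixed point of a primitive substitution, every factor that occurs once occurs infinitely often) to get infinitely many witnesses. For (1): if $(t_{k-2},t_{k-1},t_k,t_{k+1})=(-1,1,-1,-1)$ with $k\ge 4$, then $h_{2k+1}=-t_kh_{2k-2}=-G_{k-1}$, while $h_{2k}=-G_k$ and $h_{2k+2}=-G_{k+1}$, so $h_{2k}<h_{2k+1}<h_{2k+2}$ amounts to $G_{k+1}<G_{k-1}<G_k$; the prescribed letters give $v_{k-2}=v_{k-1}=-1$ and $v_k=1$, whence $G_k=G_{k-1}+G_{k-2}>G_{k-1}$, $G_{k-1}=G_{k-2}+G_{k-3}>G_{k-2}$, and $G_{k+1}=G_k-G_{k-1}=G_{k-2}<G_{k-1}$, exactly as needed. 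The word $(-1,1,-1,-1)$ is a factor of ${\bf t}$ (occurring at $t_{11}t_{12}t_{13}t_{14}$), hence occurs for infinitely many $k$, and $n=2k$ then works. Part (3) is the same with the factor $(t_{k-2},\ldots,t_{k+2})=(1,1,-1,-1,1)$, $k\ge 4$, and $n=2k+1$: here $h_{2k+1}=-G_{k-1}$, $h_{2k+2}=-G_{k+1}$, $h_{2k+3}=-(G_{k+1}+G_{k-1})$, $h_{2k+4}=-(2G_{k+1}+G_{k-1})$, so $h_{2k+1}>h_{2k+2}>h_{2k+3}>h_{2k+4}$ collapses to the single inequality $G_{k-1}<G_{k+1}$; the prescribed letters give $v_{k-2}=v_k=1$ and $v_{k-1}=-1$, whence $G_{k-1}=G_{k-2}-G_{k-3}<G_{k-2}$ and $G_{k+1}=G_k-G_{k-1}=G_{k-2}>G_{k-1}$. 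Since $(1,1,-1,-1,1)$ occurs in ${\bf t}$ (at $t_5\ldots t_9$), it occurs for infinitely many $k$.

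The routine parts are the sign checks in (2) and (4) together with the small finite verifications. The point needing genuine care is the one in (1) and (3): one must choose the factor of ${\bf t}$ long enough that the induced values $v_{k-2},v_{k-1},v_k$ pin the magnitudes $G_{k-2},G_{k-1},G_k,G_{k+1}$ down tightly enough to give \emph{all} the required strict inequalities at once. Once the correct four- (resp. five-) letter factor is fixed, the magnitude estimates telescope as above, and the passage from ``occurs once'' to ``occurs infinitely often'' is the standard uniform recurrence of the Prouhet--Thue--Morse word (one could instead exhibit an explicit infinite family of occurrences using $t_{2n}=t_n$, $t_{2n+1}=-t_n$).
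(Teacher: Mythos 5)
Your proposal is correct, and all the key computations check out (I verified the magnitude recurrence $G_{k+1}=G_k-t_kt_{k+1}G_{k-1}$ for $G_k=|h_{2k}|$, the sign bookkeeping in (2) and (4), and the two factor-forced inequality chains in (1) and (3) against the actual values of ${\bf h}$). The overall strategy is the same as the paper's --- control signs via Lemma \ref{lem:tools}, exhibit a short factor of the PTM word that forces the monotone run and invoke recurrence of ${\bf t}$ for infinitude, and for the negative statements derive contradictory constraints on consecutive PTM letters --- but your bookkeeping is organized differently. The paper works with indices of the form $4n+i$ and the special identities of Lemma \ref{lem:tools}(1),(2), whereas you package everything into the ``skip identity'' $h_{n+2}-h_n=t_{n+2}h_{n+1}$ and the second-order recurrence for $G_k$ on the even-indexed subsequence (implicit in the paper's proof of Lemma \ref{lem:tools}(3) but never isolated there). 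This buys a cleaner part (4): the paper splits on the sign of $h_n$ and appeals to cube-freeness of ${\bf t}$, while your parity split ends in the outright contradiction $t_{k+2}=1=-1$ with no combinatorial input beyond the sign lemma. Amusingly, your witness families for (1) and (3) coincide with the paper's: pushing the paper's conditions $t_{n-1}=t_n=1$ (resp.\ $t_{n-2}=t_n=-1$, $t_{n-1}=1$) through $t_{2m}=t_m$, $t_{2m+1}=-t_m$ produces exactly your factors $(-1,1,-1,-1)$ and $(1,1,-1,-1,1)$. The only points to spell out in a final write-up are the finitely many small $n$ excluded by the hypotheses $h_{n+i}\neq 0$ (i.e.\ $n+i\notin\{0,3\}$) and the requirement $k\ge 4$ so that $G_{k-3}>0$ makes the relevant inequalities strict --- both of which you already flag.
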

\begin{proof}
(1) Take any $n\in \N, n>2$ such that $t_n= t_{n-1}=1$ (there are infinitely many such values). From Lemma \ref{lem:tools}(3) we have
\begin{align*}
 u_{4n-1} &= t_{2n-1} = -t_{n-1} = -1,\\
 u_{4n+1} &= t_{2n} = t_n = 1.
\end{align*}
From the recurrence relations satisfied by the sequence ${\bf h}$ we obtain
\begin{align*}
  h_{4n+3} &= t_{4n+3}h_{4n+2} + h_{4n+1}= h_{4n+2}+h_{4n+1}> h_{4n+2},\\
  h_{4n+4} &= t_{4n+4}h_{4n+3} + h_{4n+2}= -h_{4n+3}+h_{4n+2}.
\end{align*}
From Lemma \ref{lem:tools}(1) we deduce
$$h_{4n+4} = -h_{4n+1} = -t_{4n+1}h_{4n}-h_{4n-1} =h_{4n}-h_{4n-1} > h_{4n}.$$
But also $h_{4n} = t_{4n+3}h_{4n}=h_{4n+3}$ and we get $h_{4n+2} < h_{4n+3} < h_{4n+4}$.

(2) The statement holds for $n\leq 5$. First, assume there is a $n\in \N_+$ with
$$h_{2n} < h_{2n+1}<h_{2n+2}<h_{2n+3}.$$
From Lemma \ref{lem:tools}(1) we get $h_{2n+3}=t_{2n+3}h_{2n}=-t_{n+1}h_{2n}$; we know that $h_{2n} < h_{2n+3}$, so $t_{n+1} = 1$.
From the recurrence
$$h_{2n+2}=t_{2n+2}h_{2n+1} + h_{2n} = t_{n+1}h_{2n+1} + h_{2n}= h_{2n+1} + h_{2n},$$
and as $h_{2n}<0$, we get the inequality $h_{2n+2} <h_{2n+1}$, which contradicts our assumption.

Secondly, assume there is $n\in \N_+$ with
$$h_{2n+1}<h_{2n+2}<h_{2n+3} < h_{2n+4}.$$
From Lemma \ref{lem:tools}(3) 
$t_{n+1} = u_{2n+3} \le u_{2n+4}=-1$. And again from the recurrence relation 
$$h_{2n+3}=t_{2n+3}h_{2n+2} + h_{2n+1} = -t_{n+1}h_{2n+2} + h_{2n+1} =h_{2n+2} + h_{2n+1},$$
and as $h_{2n+1}<0$ we get a contradictory inequality $h_{2n+3} <h_{2n+2}$.

It is necessary to also check the quadruplets starting from $n\in \{0,1\}$.

(3) Take any $n\in \N, n>2$ such that $t_n= t_{n-2}=-1, t_{n-1}=1$ (there are infinitely many values of $n$ with this property). From Lemma \ref{lem:tools} we have the following chain of equalities:
\begin{align*}
    h_{4n-4} &= t_{4n-4}h_{4n-7} = h_{4n-7},\\
    h_{4n-3} &= t_{4n-3}h_{4n-6} = -h_{4n-6},\\
    h_{4n-1} &= t_{4n-1}h_{4n-4} = h_{4n-4} = h_{4n-7},\\
    h_{4n} &= t_{4n}h_{4n-3} = -h_{4n-3} = h_{4n-6},\\
    u_{4n-5} &= t_{2n-3}=1,\\
    u_{4n-1} &= t_{2n-1}=-1,\\
    u_{4n} &= -1.
\end{align*}
Moreover, we also get
\begin{align*}
    h_{4n-5} &= t_{4n-5}h_{4n-6} + h_{4n-7} = -h_{4n-6} + h_{4n-7},\\
    h_{4n+1} &= t_{4n+1}h_{4n} + h_{4n-1} = h_{4n} + h_{4n-1},\\
    h_{4n+2} &= t_{4n+2}h_{4n+1} + h_{4n} = h_{4n+1} + h_{4n}.
\end{align*}
Hence
\begin{align*}
    h_{4n-1} - h_{4n} &= h_{4n-5} > 0,\\
    h_{4n} - h_{4n+1} &= -h_{4n-1} > 0,\\
    h_{4n+1} - h_{4n+2} &= -h_{4n} > 0,
\end{align*}
which implies $h_{4n-1}>h_{4n}>h_{4n+1}>h_{4n+2}$.

(4) Let $n \in \N_{\geq 4}$ be chosen such that $h_n <0$. By recurrence if $t_{n+2}=-1$ then $h_{n+2} = h_n - h_{n+1}>0 > h_n$. By a similar argument we get $t_{n+2}=t_{n+3}=t_{n+4}=1$ which is impossible as the PTM sequence is cube-free.

Otherwise, if $h_n>0, n\ge3$ we deduce from Lemma \ref{lem:tools}(3) that $n$ is odd and $h_{n+1}<0$. By Lemma \ref{lem:tools}(1) $h_{n+4} = t_{n+4}h_{n+1} \ge h_{n+1}$, a contradiction.
It remains to check $n \in \{0,1,2\}$.
\end{proof}

As a different application we prove that in size the negative terms in the sequence ${\bf h}$ dominate positive values. More precisely, we have the following

\begin{lem}
For $n>3$ we have $\sum_{i=0}^{n}h_{i}<0$.
\end{lem}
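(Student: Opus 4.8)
Write $S_n=\sum_{i=0}^{n}h_{i}$. The plan is to avoid any direct induction on $S_n$ and instead pair up the summands cleverly, exploiting Lemma~\ref{lem:tools}(1). Since $t_{2k+1}=-t_{k}$, that lemma gives, for every $k\in\N_{+}$,
$$
h_{2k-2}+h_{2k+1}=h_{2k-2}\bigl(1+t_{2k+1}\bigr)=h_{2k-2}\bigl(1-t_{k}\bigr).
$$
Letting $k$ run from $1$ to $m$, the index pairs $\{2k-2,\,2k+1\}$ are pairwise disjoint and together cover $\{0,1,\dots,2m+1\}$ with exactly the two indices $1$ and $2m$ left over. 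This yields the identity
$$
S_{2m+1}=h_{1}+h_{2m}+\sum_{k=1}^{m}h_{2k-2}\bigl(1-t_{k}\bigr),
$$
and, stopping one pair earlier so that $2m-2$ and $2m$ are also left over,
$$
S_{2m}=h_{1}+h_{2m-2}+h_{2m}+\sum_{k=1}^{m-1}h_{2k-2}\bigl(1-t_{k}\bigr).
$$

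With these formulas in hand the estimates are straightforward. By Lemma~\ref{lem:tools}(3) we have $h_{2j}<0$, hence $h_{2j}\le -1$ since $h_{2j}\in\Z$, for every $j\ge 1$; also $1-t_{k}\in\{0,2\}$ is non-negative and $h_{0}=0$. Therefore every term $h_{2k-2}(1-t_{k})$ with $k\ge 2$ is non-positive and the $k=1$ term vanishes, so each sum above is $\le 0$. For even index this already gives, for $m\ge 2$,
$$
S_{2m}\le h_{1}+h_{2m-2}+h_{2m}\le 1-1-1=-1<0 .
$$
For odd index we use the extra input $t_{2}=-1$, which forces the $k=2$ summand to equal $h_{2}(1-t_{2})=2h_{2}=-2$; hence for $m\ge 2$ the sum is $\le -2$, and
$$
S_{2m+1}\le h_{1}-2+h_{2m}=h_{2m}-1\le -2<0 .
$$
Since every $n>3$ is either $2m$ or $2m+1$ with $m\ge 2$, the claim follows.

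The only points requiring genuine care --- and the closest thing to an obstacle --- are bookkeeping ones: checking that $\{\{2k-2,2k+1\}:1\le k\le m\}$ really is a partition of $\{0,\dots,2m+1\}\setminus\{1,2m\}$ (a one-line parity argument for disjointness, plus a count to see nothing else is omitted), and confirming that the smallest admissible cases $m=2$, i.e. $n=4,5$, are covered by the displayed inequalities (they are, with $S_{5}=-2$ attaining equality in the odd bound). No separate base of induction beyond this is needed, because the argument is not inductive at all: it reduces everything to the two assertions of Lemma~\ref{lem:tools} together with the single value $t_2=-1$.
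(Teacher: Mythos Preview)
Your proof is correct. Both your argument and the paper's rest on the same two facts from Lemma~\ref{lem:tools}: the identity $h_{2k+1}=t_{2k+1}h_{2k-2}$ and the sign information $h_{2j}<0$ for $j\ge 1$. The paper isolates the initial block $\sum_{i\le 7}h_i=-3$ and then, for $i\ge 8$, pairs each \emph{positive} term $h_i$ with $h_{i-3}$ (these cancel exactly), leaving only negative contributions. You instead pair \emph{every} index $2k+1$ with $2k-2$ uniformly, obtaining the closed formulas for $S_{2m}$ and $S_{2m+1}$ in terms of $\sum h_{2k-2}(1-t_k)$. The upshot is the same cancellation mechanism, but your organization is tidier: it avoids the somewhat informal case-splitting in the paper's sum manipulation, makes the leftover terms explicit, and dispenses with a separate verification for small $n$ beyond $m=2$. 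Conversely, the paper's version makes it transparent \emph{why} the cancellation happens (positive terms literally kill their shifted partners), whereas in your formula this is hidden inside the factor $1-t_k$.
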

\begin{proof}
As ${\bf h} = (0,1,-1,0,-1,-1,-2,1,\ldots)$ the statement is true for $n<8$. For any $n\ge 8$ with $h_n >0$ (the first one is $h_{11}=3$) it follows from Lemma \ref{lem:tools} that $n$ is odd and $h_n = -h_{n-3}$. Hence when $n\ge 8$ we can write
\begin{align*}
  \sum_{i=0}^{n}h_{i}&=\sum_{i=0}^{7}h_{i} + \sum_{\substack{i=8 \\ h_i>0}}^{n}h_{i} + \sum_{\substack{i=8 \\ h_{i+3}>0}}^{n}h_{i} + \sum_{\substack{i=8 \\ h_i<0 \\ h_{i+3}<0}}^{n}h_{i}\\
  &=-3 + \sum_{\substack{i=11 \\ h_{i}>0}}^{n}(h_{i} + h_{i-3})+\sum_{\substack{i=n-2 \\ h_{i+3}>0}}^n h_{i}+\sum_{\substack{i=8 \\ h_i<0 \\ h_{i+3}<0}}^{n}h_{i}\\
  &= -3 + 0 +\sum_{\text{some }i \text{ with } h_i<0} h_i<0
\end{align*}
and hence the result.
\end{proof}

\section{Automaticity of the sequence $(h_{n}\pmod*{m})_{n\in\N}$}\label{sec3}

A well-known property of the Fibonacci sequence $(f_{n})_{n\in\N}$ is that the congruence $f_{n}\equiv 0\pmod*{m}$ has infinitely many solutions for each $m\in\N_{\geq 2}$. In this section we analyze the behavior of the sequence ${\bf h}_{m}=(h_{n}\pmod*{m})_{n\in\N}$. In particular, we prove 2-automaticity of ${\bf h}_{m}$ and apply this property to the proof that the ordinary generating function of the sequence ${\bf h}$ is transcendental over $\Q(x)$.

Because $t_{n}\equiv 1\pmod*{2}$ we immediately get $h_{n}\equiv F_{n}\pmod*{2}$. As a consequence we get the following
\begin{lem}
For $n\in\N$ we have $h_{n}\equiv 0\pmod*{2}\;\Longleftrightarrow\; n\equiv 0\pmod*{3}$.
\end{lem}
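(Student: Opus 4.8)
The plan is to reduce everything to the classical behaviour of the Fibonacci numbers modulo $2$. First I would record why the congruence stated just above the lemma holds: since $t_n \equiv 1 \pmod*{2}$ for all $n$, the recurrence $h_n = t_n h_{n-1} + h_{n-2}$ becomes $h_n \equiv h_{n-1} + h_{n-2} \pmod*{2}$, and because $h_0 = 0 = F_0$ and $h_1 = 1 = F_1$, an immediate induction on $n$ gives $h_n \equiv F_n \pmod*{2}$ for every $n \in \N$, where $(F_n)_{n\in\N}$ denotes the Fibonacci sequence. Thus the assertion $h_n \equiv 0 \pmod*{2} \Longleftrightarrow 3 \mid n$ is equivalent to the well-known statement that $F_n$ is even exactly when $3 \mid n$.

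Next I would prove this Fibonacci fact by a one-step periodicity argument. Reducing modulo $2$ we have $F_0 \equiv 0$, $F_1 \equiv 1$, $F_2 \equiv 1$, $F_3 \equiv 0$, $F_4 \equiv 1 \pmod*{2}$; since a pair of two consecutive residues determines all subsequent ones via the recurrence, and $(F_3, F_4) \equiv (0,1) \equiv (F_0, F_1) \pmod*{2}$, the sequence $(F_n \bmod 2)_{n\in\N}$ is purely periodic with period $3$ and period block $(0,1,1)$. Hence $F_n \equiv 0 \pmod*{2}$ precisely when $n \equiv 0 \pmod*{3}$, and combining this with $h_n \equiv F_n \pmod*{2}$ finishes the proof.

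There is no real obstacle here: the whole point is that working modulo $2$ collapses all the arithmetic carried by the Prouhet--Thue--Morse coefficients, because the values $t_n \in \{-1,1\}$ are congruent modulo $2$, so ${\bf h}_2$ is literally the Fibonacci sequence modulo $2$. I would emphasise that this phenomenon is special to the modulus $2$; for $m \geq 3$ the residues $1$ and $-1$ are no longer equal modulo $m$, the coefficient sequence genuinely matters, and one must instead appeal to the general automaticity result for ${\bf h}_m$ developed in the remainder of this section in order to control the zeros of ${\bf h}_m$.
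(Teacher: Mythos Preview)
Your proof is correct and follows exactly the same approach as the paper: reduce modulo $2$ using $t_n\equiv 1\pmod*{2}$ to get $h_n\equiv F_n\pmod*{2}$, and then invoke the well-known period-$3$ behaviour of the Fibonacci sequence modulo $2$. If anything, you have written out in full the details that the paper leaves implicit.
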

\begin{proof}
This is a consequence of the well-known modulo 2 behavior of the Fibonacci sequence.
\end{proof}

To obtain the main result of this section we need to recall one (of the many) equivalent definitions of automaticity of infinite sequences. To do this we recall the definition of an automaton.  

\begin{df} A 6-tuple $\mathcal A = (Q,\Sigma,\delta,q_0,\Delta,\tau)$ is a {\it deteriministic finite automaton with output} (or DFAO) if it has the following properties:
\begin{itemize}
    \item the set of states $Q$ is finite;
    \item the input alphabet $\Sigma$ is a finite set;
    \item the transition function $\delta$ is a mapping $Q\times \Sigma \mapsto Q$;
    \item the starting state $q_0$ is a element of $Q$;
    \item the output alphabet $\Delta$ is a finite set;
    \item the output function $\tau$ is a mapping $Q\mapsto \Delta$.
\end{itemize}
The transition function is usually extended to $\hat \delta : Q\times \Sigma^* \mapsto Q$ by defining $\hat \delta(q,\varepsilon) = q, \hat\delta(q,sw) = \hat\delta(\delta(q,s),w)$ for any $q\in Q, s\in \Sigma, w\in \Sigma^*$. Obviously $\hat \delta(q,s) = \delta (q,s)$ for $(q, s)\in Q\times\Sigma$.

Such an automaton represents a {\it finite-state function} $f_{\mathcal A}: \Sigma^* \mapsto \Delta$ by defining for any finite word $w=w_1w_2w_3\ldots w_i$,
$$f_{\mathcal A}(w) = \tau(\delta(\ldots \delta(\delta(\delta(q_0,w_1),w_2),w_3),\ldots, w_i)) = \tau(\hat\delta(q_0,w)).$$
\end{df}

\begin{df}
A sequence $(a_n)_{n\in\N}\in\Delta^{\N}$ is $k$-automatic if $a_n$ is a finite-state function of the base-$k$ digits of $n$.
\end{df}

The digits of $n$ are being fed to the automaton from left to right, i.e. starting with the most significant digit. It is worth noting that reversing the direction would yield an equivalent definition: for any DFAO $\mathcal A$ there is another DFAO $\mathcal B$ such that $f_{\mathcal A}(w) = f_{\mathcal B}(w^R)$ for any $w\in \Sigma^*$ \cite[Theorem 5.2.3]{AS2}.

DFAOs are commonly represented by their {\it transition graph}. Each state is represented by a node, $\tau$ is represented by labels on states, and for any $q\in Q$ and $s\in \Sigma$ there is an edge from $q$ to $\delta(q,s)$ with label $s$. An example is given in Figure \ref{automaton_tm}.

\begin{figure}[h]
       \centering
         \includegraphics[height=1in]{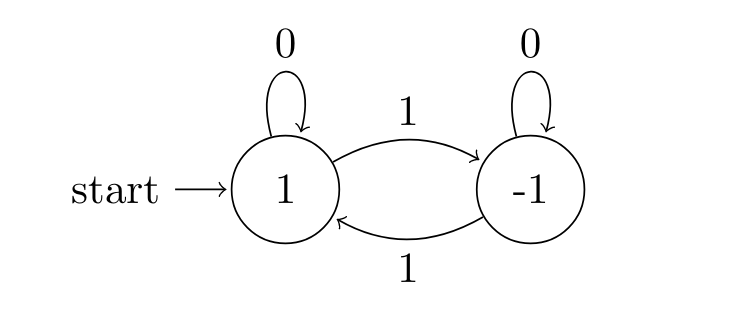}
        \caption{Transition graph of DFAO representing $(t_n)_{n \in \N}$}
        \label{automaton_tm}
\end{figure}

\begin{thm}
Let $m\in \N_{\ge 2}$. The sequence ${\bf h}_{m}=(v_{m,n})_{n\in N}$, where $v_{m,n}=h_{n} \pmod*{m}$, is 2-automatic.
\end{thm}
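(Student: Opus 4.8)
I will pass to the companion vector of the recurrence and prove directly that the resulting finite–alphabet sequence has a finite $2$-kernel. Adopt the convention $h_{-1}:=1$, so that the identity $h_1=t_1h_0+h_{-1}$ holds, and for $n\ge 0$ set $V_n:=(h_n,h_{n-1})^{\mathrm{T}}$, so that $V_0=(0,1)^{\mathrm{T}}$ and $V_n=M(t_n)V_{n-1}$ for every $n\ge 1$, where $M(s):=\left(\begin{smallmatrix}s&1\\1&0\end{smallmatrix}\right)$. Reducing modulo $m$, the sequence $(V_n\bmod m)_{n\ge 0}$ takes values in the finite set $(\mathbb Z/m\mathbb Z)^2$, and $h_n\bmod m$ is its first coordinate for every $n$. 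Since a letter-to-letter image of a $2$-automatic sequence is $2$-automatic, it suffices to prove that $(V_n\bmod m)_{n\ge 0}$ is $2$-automatic, and for this I will show that its $2$-kernel $\{(V_{2^in+j}\bmod m)_n:\ i\ge 0,\ 0\le j<2^i\}$ is contained in an explicit finite set of sequences.

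Fix $i\ge 0$ and $0\le j<2^i$ and look at the block of transition matrices linking $V_{2^in+j}$ to $V_{2^i(n+1)+j}$, namely the ordered product $M(t_{2^in+j+2^i})M(t_{2^in+j+2^i-1})\cdots M(t_{2^in+j+1})$. The key point is that this product depends on $n$ only through the pair $(t_n,t_{n+1})$. Indeed, for $1\le s\le 2^i$ write $2^in+j+s=2^iq+\rho$ with $0\le\rho<2^i$; then $q=n+\lfloor(j+s)/2^i\rfloor\in\{n,n+1\}$ because $0\le j<2^i$, and since the lowest $i$ binary digits do not interact with the rest, $s_2(2^iq+\rho)=s_2(q)+s_2(\rho)$, so $t_{2^in+j+s}=t_q\,(-1)^{s_2(\rho)}$, with $q=n$ if $j+s<2^i$ and $q=n+1$ if $j+s\ge 2^i$. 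Hence there is a well-defined map $E_{i,j}\colon\{-1,1\}^2\to M_2(\mathbb Z)$ with
\[
V_{2^i(n+1)+j}=E_{i,j}(t_n,t_{n+1})\,V_{2^in+j}\qquad\text{for all }n\ge 0,
\]
and therefore $V_{2^in+j}=E_{i,j}(t_{n-1},t_n)\cdots E_{i,j}(t_0,t_1)\,V_j$ for $n\ge 1$. Consequently, modulo $m$, the kernel sequence $(V_{2^in+j}\bmod m)_n$ is completely determined by the single pair $\bigl(E_{i,j}\bmod m,\ V_j\bmod m\bigr)$, the Thue--Morse sequence being fixed once and for all.

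Such a pair --- a map $\{-1,1\}^2\to M_2(\mathbb Z/m\mathbb Z)$ together with a vector in $(\mathbb Z/m\mathbb Z)^2$ --- can be chosen in at most $(m^4)^4\cdot m^2=m^{18}$ ways, a bound independent of $i$ and $j$. Thus the $2$-kernel of $(V_n\bmod m)_{n\ge 0}$ has at most $m^{18}$ distinct elements, so this sequence is $2$-automatic, and hence so is $(h_n\bmod m)_{n\ge 0}$.

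\textbf{Where the difficulty lies.} The whole argument rests on the uniformity claim that one and the same map $E_{i,j}$ governs the step $V_{2^in+j}\to V_{2^i(n+1)+j}$ for every $n\ge 0$, including $n=0$; this is precisely why the convention $h_{-1}=1$ (equivalently $V_0=(0,1)^{\mathrm{T}}$, $V_1=M(t_1)V_0$) is imposed. Making this precise requires checking that attaching the fixed length-$i$ low block $j$ to the running high part $n$ produces no carry into $n$ beyond the obvious one occurring when $j+s$ first reaches $2^i$, together with the boundary instance $\rho=0$, where $t_{2^i(n+1)}=t_{n+1}$. These verifications are elementary and use nothing beyond the digit recursion $t_{2k}=t_k$, $t_{2k+1}=-t_k$ and the standard characterisation of $2$-automaticity by finiteness of the $2$-kernel; in particular the sign analysis of Section~\ref{sec2} plays no role here.
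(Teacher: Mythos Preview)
Your proof is correct and takes a genuinely different route from the paper's. The paper builds an explicit DFAO: it introduces ``jump'' functions $f_k$ sending $(v_{2^kl-2},v_{2^kl-1},t_l)$ to $(v_{2^k(l+1)-2},v_{2^k(l+1)-1})$, observes that the sequence $(f_k)_{k\ge 0}$ is eventually periodic, and then assembles a cascade automaton whose cells track the pairs $(v_{\cdot},v_{\cdot})$ at the various scales while a copy of the Thue--Morse automaton feeds the sign parameter. Your argument instead goes straight through the $2$-kernel characterisation: each kernel element $(V_{2^in+j}\bmod m)_n$ obeys a first-order recurrence $a_{n+1}=E_{i,j}(t_n,t_{n+1})a_n$ driven by the \emph{same} Thue--Morse pair sequence $(t_n,t_{n+1})_n$ regardless of $(i,j)$, and is therefore determined by the finite datum $(E_{i,j}\bmod m,\ V_j\bmod m)$, of which there are at most $m^{18}$. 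The paper's approach is more constructive --- it actually produces an automaton and yields the application in Theorem~\ref{mvalues} via the periodicity of $(f_k)$ --- while yours is shorter and more conceptual, trading explicitness for a clean counting bound. Your observation that the block product depends only on $(t_n,t_{n+1})$ is exactly the content of the identity $t_{2^iq+\rho}=t_q t_\rho$ for $\rho<2^i$, which the paper also exploits, only packaged differently.
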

\begin{proof}
Because we fixed $m$ and will not change it during the proof, in order to simplify the notation we write $v_{n}$ instead of $v_{m,n}$.

For $k\in\N$ we define a sequence of functions
\begin{align*}
    f_k : \left(\{0,\ldots, m-1\}\times \{0,\ldots, m-1\} \right)&\times \{-1,1\}  \\ \mapsto\{0,\ldots, m-1\}&\times \{0,\ldots, m-1\},
\end{align*}
recursively as follows:
\begin{align*}
f_0((x,y),z) &= (y,zy+x \pmod*{m}),\\
f_{i+1}((x,y),z) &= f_i(f_i((x,y),z),-z\pmod*{m})\text{ for } i\in \N.
\end{align*}
From the recurrence relations of $t_n$ it is easy to see that for any $l\in \N$,
\begin{align*}
    f_0((v_{l-2}, v_{l-1}), t_l) &= (v_{l-1}, v_{l}\pmod*{m}),\\
    f_1((v_{2l-2}, v_{2l-1}), t_l) &= (v_{2l}, v_{2l+1}\pmod*{m}),\\
    f_2((v_{4l-2}, v_{4l-1}), t_l) &= (v_{4l+2}, v_{4l+3}\pmod*{m}).
\end{align*}

By a simple induction we find that for any $k\in \N$,
$$f_k(v_{2^kl-2}, v_{2^kl-1}, t_l) = (v_{2^kl +2^k-2}, v_{2^kl+2^k-1}\pmod*{m}).$$
There are only finitely many possible functions $f_k$, so we can take the smallest $p\in \N$ and the smallest $q \in \N_+$ such that $f_p \equiv f_{p+q}$ (as functions). The function $f_{p+q}$ can be written in terms of $f_p$ composed $2^q$ times. The same expression can be used to define $f_{p+2q}$ in terms of $f_{p+q}$ and so on. It follows that $f_p\equiv f_{p+q} \equiv f_{p+2q}\equiv \ldots$, and $f_{p+k}\equiv f_{p+q+k} \equiv f_{p+2q+k}\equiv \ldots$ for $k \in \N$. As a consequence, the sequence $(f_k)_{k\in \N}$ is ultimately periodic.

For any $n\in \N_+$ there exist $a_1,\cdots, a_b \in \N$ with $a_1 >a_2 > \cdots > a_b$ such that $n=2^{a_1} + 2^{a_2} + \ldots + 2^{a_b}$ (the base 2 representation). We can write
\begin{align*}
  f_{a_1}((v_{-2},v_{-1}),t_0)&=(v_{2^{a_1}-2},v_{2^{a_1}-1}\pmod*{m}), \\
  f_{a_2}((v_{2^{a_1}-2},v_{2^{a_1}-1}),t_{2^{a_1-a_2}})&=(v_{2^{a_1}+2^{a_2}-2},v_{2^{a_1}+2^{a_2}-1}\pmod*{m}), \\
  f_{a_3}((v_{2^{a_1}+2^{a_2}-2},v_{2^{a_1}+2^{a_2}-1}),t_{2^{a_1-a_3}+2^{a_2-a_3}})&=(v_{2^{a_1}+2^{a_2}+2^{a_3}-2},v_{2^{a_1}+2^{a_2}+2^{a_3}-1}\pmod*{m}),
\end{align*}
  and so on. As $t_n = (-1)^{s_2(n)}$ we can compute
  \begin{align*}
      f_{a_b}(f_{a_{b-1}}(\ldots f_{a_3}(f_{a_2}(f_{a_1}(v_{-2},v_{-1},1),-1),1)& \ldots,(-1)^b),(-1)^{b-1})\\&=(v_{n-2},v_{n-1}\pmod*{m}).
  \end{align*}

We define $a'_1,\ldots, a'_b \in \N$ by
$$
a'_k =\begin{cases} a_k & \text{ if } k<p, \\
((a_k-p) \pmod*{q}) +p &\text{ otherwise} ,\end{cases}
$$
so that $f_{a'_k}\equiv f_{a_k}$ for $k\in \{1,\ldots,b\}$.

We are ready to construct a DFAO $\mathcal A = (Q,\Sigma,\delta,q_0,\Delta,\tau)$, which given the binary representation of $n$ (reading from the most significant digit) will compute $v_n$. Obviously $\Sigma = \Delta = \{0,1,2,\ldots, m-1\}$.

\begin{figure}[h]
       \centering
         \includegraphics[height=3in]{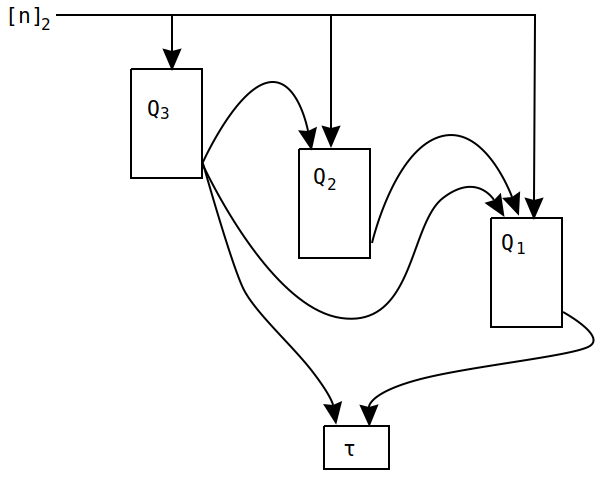}
        \caption{Idea of a cascade automaton}
        \label{cascade}
\end{figure}

The type of construction we will perform is a variation of the so-called cascade product of automatons. Let us define
\begin{equation*}
    Q_1 = (\Sigma\times\Sigma)^p,\quad Q_2 = (\Sigma\times\Sigma)^q,\quad Q_3 = \{-1,1\},\quad Q = Q_1 \times Q_2 \times Q_3.
\end{equation*}
We see that the set of states has some internal structure. We consider $Q_1,Q_2$ to be composed``cell'', each cell can be in one of $m^2$ states representing a pair of elements of $\Sigma$. Every element of $Q_1$ represents a state of $p$ such cells, and elements of $Q_2$ represent a state of $q$ such cells. Looking at  Figure \ref{cascade} we see that our automaton can be split into parts. The leftmost part is the automaton for the PTM sequence, which after being fed the binary representation of $n$ is in state $t_n$. The middle receives as input not only binary digits of $n$ but also previous state of the left automaton, and the rightmost automaton receives digits of $n$ and previous state of remaining automatons. The initial state of our automaton is $q_0 =((v_{-2},v_{-1})^p,(v_{-2},v_{-1})^q,1)$.

We have to define the transition function. On each step values stored in cells of $Q_2$ will be ``circularly shifted around'' and values stored in $Q_1$ will be ``moved downwards''.
If the read digit is zero, the transformation only consists of moving values between the cells of $Q_1, Q_2$ as follows:
\begin{align*}
\delta&\big(((g_0,\ldots, g_{p-1}),(g_p,\ldots,g_{p+q-2}, g_{p+q-1}),c),0\big)=\\
&((g_1,\ldots, g_p),(g_{p+1},\ldots,g_{p+q-1},g_{p}),c).
\end{align*}
However, when we encounter a 1, we apply the following proper transformation before moving:
\begin{align*}
  \delta&\big(((g_0,\ldots, g_{p-1}),
  (g_p,\ldots,g_{p+q-2},g_{p+q-1}),c),1\big)=\\
  &\big((f_0(g_1,c),\ldots, f_{p-1}(g_p,c)),
  (f_{p}(g_{p+1},c),\ldots,f_{p+q-2}(g_{p+q-1},c),f_{p+q-1}(g_{p},c)), -c\big).
\end{align*}

\begin{figure}[h]
       \centering
         \includegraphics[height=3in]{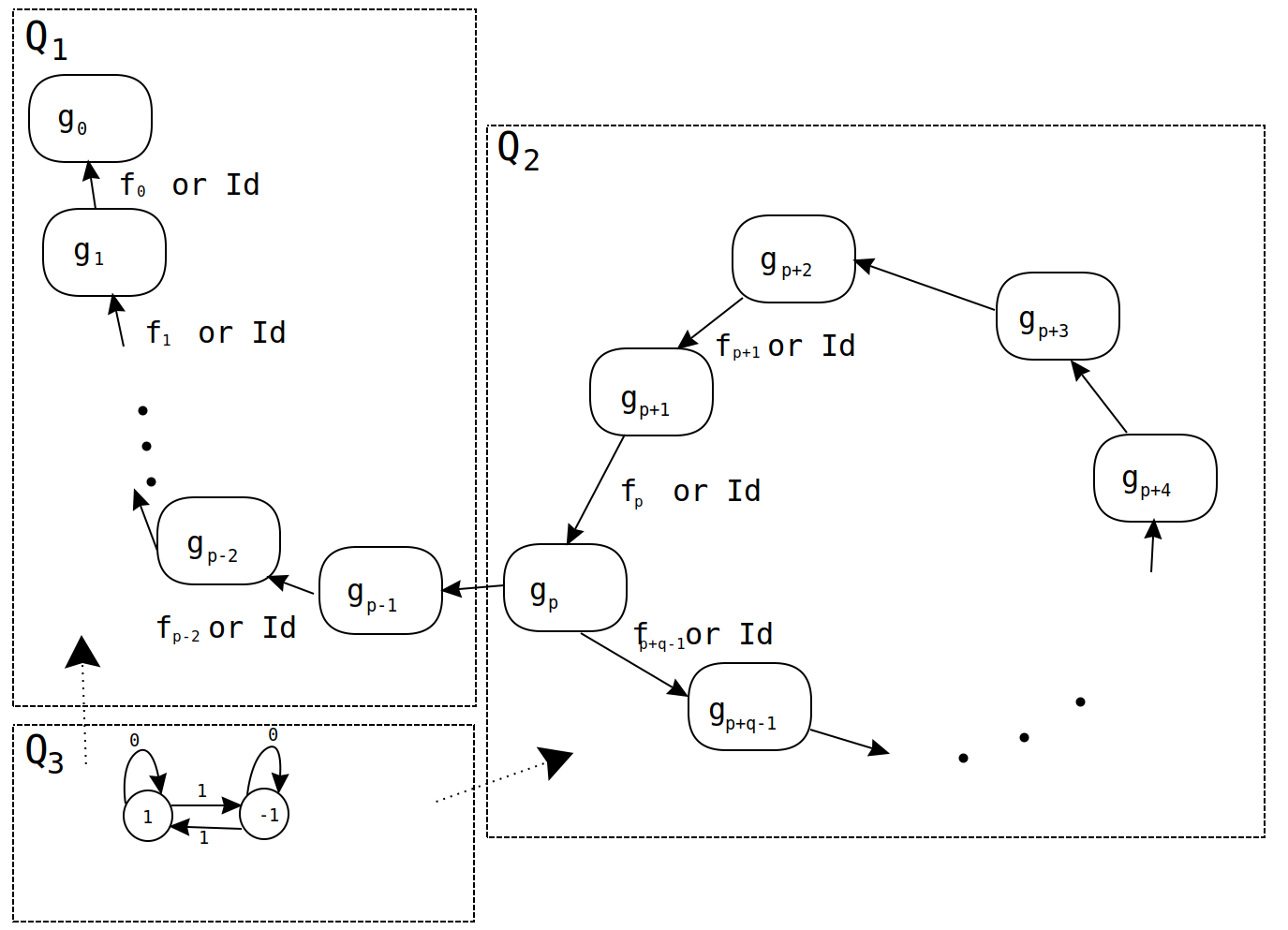}
        \caption{On each step the values are moved between cells; if the digit is $1$ the proper functions are applied when moving. $Q_3$ is a simple two-state Thue-Morse automaton; its current value is passed as the last parameter to $f_0,\ldots, f_{p+q-1}$}
\end{figure}

When reading the binary representation of $n$ we first encounter a digit ``1'' on the position $a_1$; after processing that step there will be the pair $(v_{2^{a_1}-2},v_{2^{a_1}-1}) = f_{a_1}((v_{-2}, v_{-1}), t_0)$ stored in the $a'_1$th cell. The following zeros will only move the values until the moment we encounter a digit ``1'' on the position $a_2$. Before reading that ``1'', the pair $(v_{2^{a_1}-2},v_{2^{a_1}-1})$ will be in the $(a_2+1)$th cell (or $p$th when $a_2=p+q-1$).
After reading that ``1'', the pair $$(v_{2^{a_1}+2^{a_2}-2},v_{2^{a_1}+2^{a_2}-1})=f_{a_2}((v_{2^{a_1}-2},v_{2^{a_1}-1}), t_{2^{a_1-a_2}})$$
will be stored in the $a'_2$th cell.
The process will continue until we run out of digits; then $(v_{n-2}, v_{n-1})$ is in the $0$th cell.  It follows that

$$  \hat \delta\big(q_0,[n]_2\big)=((v_{n-2},v_{n-1}),\ldots,t_n).  $$

It remains to define the output function such that the automaton returns $v_n\equiv t_nv_{n-1}+v_{n-2}\pmod*{m}$, i.e, we put
$$\tau\big((g_{0,0}, g_{0,1}),(g_{1,0}, g_{1,1}),\ldots ,(g_{q+p-1,0}, g_{q+p-1,1}),c\big) = cg_{0,1} + g_{0,0}.$$
\end{proof}

\begin{exam}
Let us follow the above construction in the case $m=3$. Let us recall that $v_{n}=v_{3,n}$ in this case. We have
\begin{align*}
    f_0(x,y,z)&\equiv (y, zy+x),\\
    f_1(x,y,z)& \equiv f_0(y,zy+x,-z)\equiv (zy+x, -z^2y-zx+y)\\&\equiv (zy+x, -zx),\\
    f_2(x,y,z) &\equiv f_1(zy+x, -zx,-z)\equiv (z^2x+zy+x, z^2y+zx)\\&\equiv (zy-x, zx+y),\\
    f_3(x,y,z) &\equiv f_2(zy-x, zx+y,-z)\equiv (-z^2x-zy-zy+x, -z^2y+zx+zx+y)\\&\equiv (zy, -zx),\\
    f_4(x,y,z) &\equiv f_3(zy,-zx,-z)\equiv (z^2x, z^2y)\\&\equiv (x, y),\\
    f_5(x,y,z) &\equiv f_4(x,y,-z) \\&\equiv (x,y),\\
    \vdots &
\end{align*}
In this case we have $p=4,q=1$, so the internal state of an automaton can be described by a tuple $$(g_0,g_1,g_2,g_3,g_4,c) \in (\{0,1,2\}\times\{0,1,2\})^5 \times \{-1,1\}.$$
Because $f_4(x,y,z)\equiv (x,y)$ the value of $g_4$ will be constant and equal to the initial value of $(2,1)$. It follows that the output value depends only on the last four digits of $n$ and the value of $c$ just before reading those four digits, or in other words on $n \mod 16$ and $s_2(\lfloor \frac{n}{16} \rfloor) \mod 2$.

After a direct computation one can see that the sequence $(v_{n})_{n\in \N}$ is the image of $(t_{n})_{n\in \N}$ under the morphism
\begin{align*}
  1 &\rightarrow 0 1 2 0 2 2 1 1 0 1 1 0 1 2 2 1  ,\\
  -1 &\rightarrow 1 2 0 2 2 0 2 2 1 1 0 1 2 0 2 1.
\end{align*}

We see that in general, the number of states of this automaton is $9^5\cdot 2=118098$, it may be of course reduced using some minimalization algorithms. By observing the behavior of $v_{n}$ for the first $10^5$ values one can obtain an automaton with 28 states, which we present in Figure \ref{automaton_3} below.

\begin{figure}[h]
       \centering
         \includegraphics[width=4.5in]{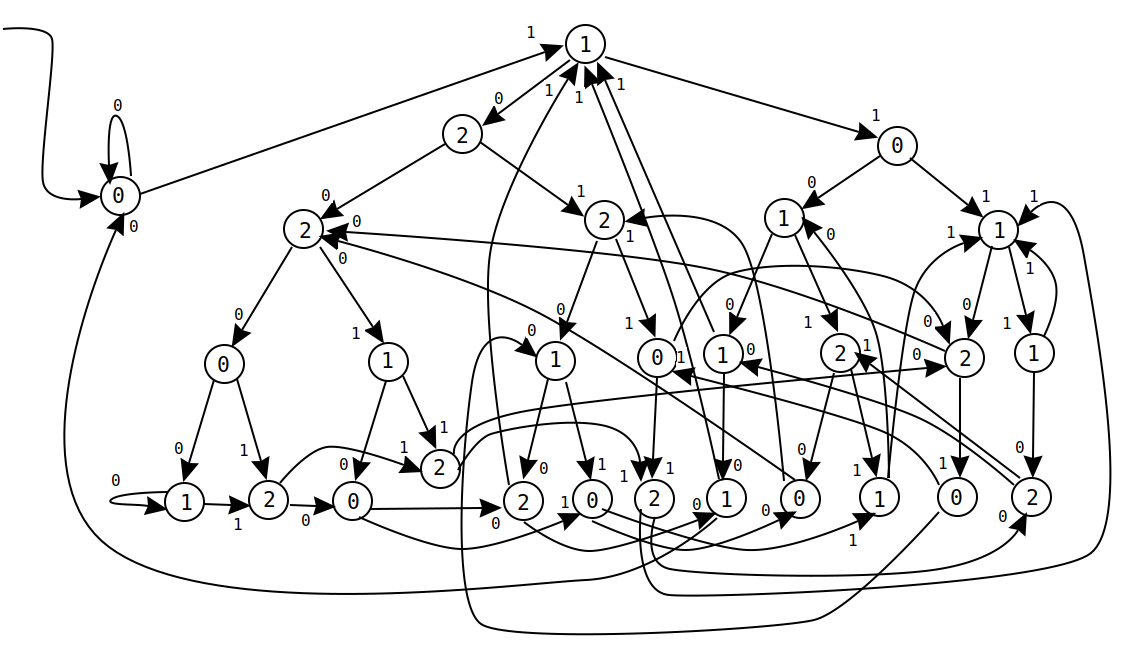}
        \caption{An automaton generating the sequence $(v_{n})_{n\in\N}$}
       \label{automaton_3}
    \end{figure}

However, the ``cyclic'' part of the construction is small ($q=0$), and it is easier to construct an automaton reading from the least significant digit. The automaton will need to differentiate between the possible values of $n \mod 16$, and after that compute parity of remaining digits. It can be done using 47 states: a binary tree of 31 states and an additional state at every leaf so we get a binary counter. This can be further minimized to obtain a 22-state automaton in Figure \ref{automaton_3_reversed}.

\begin{figure}[h]
       \centering
         \includegraphics[width=4.9in]{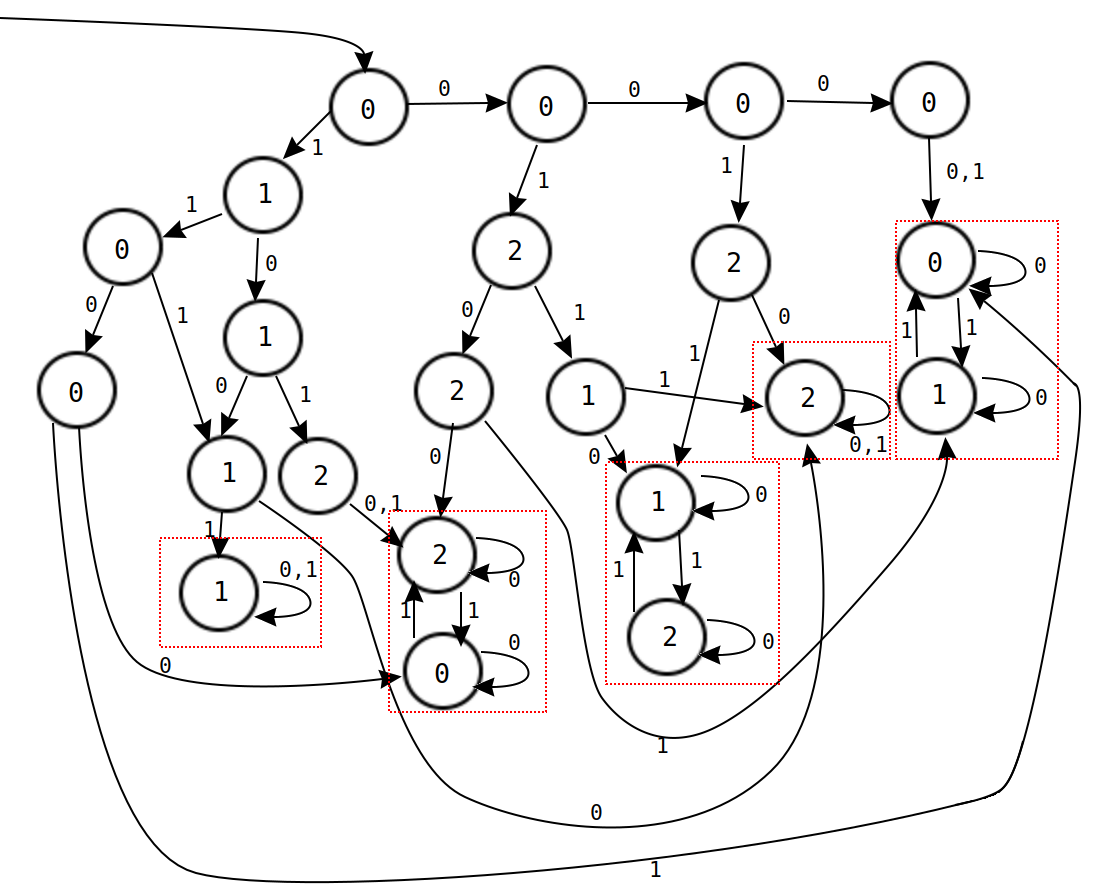}
        \caption{A minimal automaton generating the sequence $(v_{n})_{n\in\N}$ reading from the least significant digit. The dotted lines represent five different ``leaves'' of the original tree, after feeding the automaton with the first four digits we always finish in one of those parts and only parity of remaining 1's is relevant to the output}
       \label{automaton_3_reversed}
    \end{figure}

\end{exam}

\begin{exam}
Let us follow the above construction in the case $m=4$. Let us recall that $v_{n}=v_{4,n}$. We have
\begin{align*}
    f_0(x,y,z)&\equiv (y, zy+x),\\
    f_1(x,y,z)& \equiv f_0(y,zy+x,-z)\equiv (zy+x, -z^2y-zx+y)\\&\equiv (zy+x, -zx),\\
    f_2(x,y,z) &\equiv f_1(zy+x, -zx,-z)\equiv (z^2x+zy+x, z^2y+zx)\\&\equiv (2x+zy, zx+y),\\
    f_3(x,y,z) &\equiv f_2(2x+zy, zx+y,-z)\equiv (4x+2zy-z^2x-zy, -z^2y-2zx+zx+y)\\&\equiv (-x+zy, -zx),\\
    f_4(x,y,z) &\equiv f_3(-x+zy,-zx,-z)\equiv (x-zy+z^2x, -zx+z^2y)\\&\equiv (2x-zy, -zx+y),
    \end{align*}
    \begin{align*}
    f_5(x,y,z) &\equiv f_4(2x-zy,-zx+y,-z) \equiv (4x-2zy-z^2x+zy,2zx-z^2y-zx+y) \\&\equiv (-x-zy,zx),\\
    f_6(x,y,z) &\equiv f_5(-x-zy,zx,-z) \equiv (x+zy+z^2x,zx+z^2y) \\&\equiv (2x+zy,zx+y) = f_2(x,y,z),\\
    \vdots &
\end{align*}
In this case we have $p=2,q=4$, so the internal state of an automaton can be described by a tuple $$(g_0,g_1,g_2,g_3,g_4,g_5,c) \in (\{0,1,2,3\}\times\{0,1,2,3\})^6 \times \{-1,1\}.$$
Again we can split the behavior depending on $n \pmod*{4}$ and $\lfloor \tfrac n 4\rfloor$. After processing all but $p=2$ digits of $n$ we have some values in $(g_2,c)$, and $v_{n}$~can be computed in terms of this value and the last two binary digits of~$n$. Take the sequence $(v'_{n})_{n\in \N}$ such that $v'_n$ is equal to the pair $(g_2,c)$ after our automaton was fed with $n$. Hence $(u_n)_{n\in \N}$ is an image of $(v'_{n})_{n\in N}$ under a uniform 4-morphism. On the other hand, the state of $(g_2,c)$ depends on the state in which $(g_2,c)$ was $q=4$ digits prior and the value of those digits; this leads to a simple left-to-right 16-automaton with at most $4\cdot 4 \cdot 2$ states that computes $(u'_{n})_{n\in N}$.
Again, using experimental approach one can find dependencies between appropriate subsequences $(v_{2^{k}n+i})_{n\in\N}$ to construct an automaton which represents $(v_{n})_{n\in\N}$. Sadly, the minimal automaton has 80~states, and there is no readable way to represent it on a picture.

In a DFA we have to stick to the one way of inputing digits; this is why our construction quickly gets big as we need to perform computations for all possible cases until we can verify which case we are in. In the left-to-right approach we have to guess how many digits $n$ has $\pmod*{q}$ and when the number of digits left is smaller than $p$.  In the right-to-left approach, we would need to guess $t_n$ and use inverses of recurrence relations, which may be multi-functions.
\end{exam}

We give two applications of our results.

\begin{thm}\label{mvalues}
 For each $m\in\N_{\geq 2}$ there are infinitely many solutions of the congruence $h_{n}\equiv 0\pmod*{m}$.
\end{thm}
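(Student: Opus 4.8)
The plan is to reduce the statement to showing that the value $0$ occurs infinitely often in the $2$-automatic sequence ${\bf h}_{m}=(v_{n})_{n\in\N}$, $v_{n}=h_{n}\bmod m$; it certainly occurs at least once, since $v_{0}=h_{0}=0$ (and $v_{3}=h_{3}=0$). An arbitrary automatic sequence need not repeat a value that occurs (take $a_{0}=1$, $a_{n}=0$ for $n\geq 1$), so the argument must exploit the specific feature that ${\bf h}_{m}$ is governed by the PTM sequence, whose subshift is minimal.

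First I would encode the recurrence as a matrix cocycle. Put $A=\left(\begin{smallmatrix}0&1\\1&1\end{smallmatrix}\right)$, $B=\left(\begin{smallmatrix}0&1\\1&-1\end{smallmatrix}\right)$, and $M(1)=A$, $M(-1)=B$, so that $\left(\begin{smallmatrix}h_{n-1}\\h_{n}\end{smallmatrix}\right)=M(t_{n})M(t_{n-1})\cdots M(t_{2})\binom{0}{1}$. Since $\det A=\det B=-1$, reduction mod $m$ gives $G_{n}:=M(t_{n})\cdots M(t_{2})\in GL_{2}(\Z/m\Z)$, a finite group. Let $X$ be the PTM subshift, i.e. the orbit closure of ${\bf t}$ under the shift $\sigma$; as ${\bf t}$ is the fixed point of the primitive substitution $1\mapsto 1(-1),\ -1\mapsto (-1)1$, the system $(X,\sigma)$ is minimal. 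Form the skew product $Y=X\times GL_{2}(\Z/m\Z)$ with $T(x,g)=(\sigma x,M(x_{0})g)$, a group extension of $(X,\sigma)$ by a finite group; a direct computation gives $T^{k}(\sigma^{2}{\bf t},I)=(\sigma^{k+2}{\bf t},G_{k+1})$, where $I$ is the identity.

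The heart of the proof is the step ``$G_{n}=I$ for infinitely many $n$''. Here I would invoke the standard fact that a group extension of a minimal system by a finite group is a disjoint union of minimal components, each surjecting onto the base; consequently the orbit closure $Z$ of the single point $(\sigma^{2}{\bf t},I)$ is itself one of these minimal components and contains $(\sigma^{2}{\bf t},I)$. The set $V=Z\cap(X\times\{I\})$ is clopen in $Z$ and nonempty, so by minimality of $(Z,T)$ the orbit of $(\sigma^{2}{\bf t},I)$ meets $V$ at a syndetic (in particular infinite) set of times $k$. For each such $k$ we get $G_{k+1}=I$, hence $\left(\begin{smallmatrix}h_{k}\\h_{k+1}\end{smallmatrix}\right)\equiv\binom{0}{1}\pmod m$, so $m\mid h_{k}$; this produces infinitely many solutions of $h_{n}\equiv 0\pmod m$.

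I expect the main obstacle to lie in this last step, and two points need care: the bookkeeping that identifies $G_{n}$ with the fibre coordinate of the orbit through $(\sigma^{2}{\bf t},I)$, and either citing or proving the decomposition of a finite group extension of a minimal system into minimal pieces (short proof: minimal subsets of $Y$ exist, their right $GL_{2}(\Z/m\Z)$-translates partition $Y$ and each projects onto $X$, so the orbit closure of any point of $Y$ equals one of these minimal pieces). It is worth noting that a naive ``self-similar'' shortcut — iterating the identity $P_{k+1}=DP_{k}D^{-1}P_{k}$ for the product $P_{k}=M(t_{2^{k}-1})\cdots M(t_{0})$ over PTM prefixes, where $D=\mathrm{diag}(1,-1)$ and $M(-z)=DM(z)D^{-1}$ — does not suffice on its own: for some moduli (already $m=3$) this ``powers-of-two thread'' never returns to $I$, so one genuinely needs the density of the cocycle orbit in its minimal component rather than a single arithmetic progression of indices.
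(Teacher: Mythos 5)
Your proof is correct, but it follows a genuinely different route from the paper. The paper stays entirely inside the machinery of its automaticity proof: the finitely many transition functions $f_k$ (which encode the effect of a block of $2^k$ steps of the recurrence modulo $m$) are eventually periodic, $f_p\equiv f_{p+q}$ with $q$ even, and comparing $f_{p+q}\bigl((v_{-2},v_{-1}),1\bigr)$ with $f_p$ applied to the state at index $2^{p+q}-2^p$ and invoking injectivity of $f_p(\cdot,\cdot,1)$ forces the state at $2^{p+q}-2^p$ back to the initial state; this produces the explicit infinite family of witnesses $n=2^{p+q}-2^p$. You instead package the mod-$m$ recurrence as a cocycle $G_n=M(t_n)\cdots M(t_2)$ with values in the finite group $GL_2(\Z/m\Z)$ (the determinant check $\det M(\pm1)=-1$ is the point that makes this work), and use minimality of the PTM subshift together with the decomposition of a finite group extension of a minimal system into minimal components to get syndetic returns of $G_n$ to the identity. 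Your approach is softer and less explicit (no concrete indices), but it buys more: it yields syndeticity of the solution set rather than mere infinitude, it immediately gives that every attained residue pair is attained syndetically often, and it visibly generalizes to any twist $r_n=a_nr_{n-1}+r_{n-2}$ by a uniformly recurrent $\pm1$-sequence, whereas the paper's argument is tied to the base-$2$ self-similarity of PTM. The bookkeeping you flag as delicate checks out: $T^k(\sigma^2{\bf t},I)=(\sigma^{k+2}{\bf t},G_{k+1})$ and $G_{k+1}\equiv I$ gives $\bigl(\begin{smallmatrix}h_k\\ h_{k+1}\end{smallmatrix}\bigr)\equiv\bigl(\begin{smallmatrix}0\\1\end{smallmatrix}\bigr)\pmod m$. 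One small slip, confined to your closing aside and harmless to the proof: with $D=\mathrm{diag}(1,-1)$ one has $DM(z)D^{-1}=-M(-z)$, not $M(-z)$, so the self-similar recursion for the prefix products $P_k$ needs the extra sign (or you should conjugate by $\mathrm{diag}(1,-1)$ up to the central element $-I$); this does not affect your main argument, and your broader point---that the pure powers-of-two thread need not return to $I$, so some additional input (your minimality, or the paper's injectivity trick applied at the shifted indices $2^{p+q}-2^p$) is genuinely needed---stands.
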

\begin{proof}
Repeat the construction from the previous proof and notice that if $f_p\equiv f_{p+q}$ then
$$f_{p+q}(v_{-2}, v_{-1},1)=(v_{2^{p+q}-2},v_{2^{p+q}-1})=f_p(v_{2^{p+q}-2^p-2},v_{2^{p+q}-2^p-1},t_{2^q-1}).$$
Without loss of generality, we can take $q$ to be even (take $2q$ instead of $q$) so $t_{2^q-1} =1$. The function $f_p(\cdot, \cdot, 1)$ is bijective as a composition of bijective functions, hence $$(v_{2^{p+q}-2^p-2},v_{2^{p+q}-2^p-1})=(v_{-2},v_{-1})$$
and $0=v_0=v_{2^{p+q}-2^p}$. There are infinitely many $p,q$ such that $f_p\equiv f_{p+q}$ and $q$ is even, hence there are infinitely many 0's in this sequence.
\end{proof}
The above proof also shows that for any $m$ all values taken by the sequence $(h_n \mod m)_{n\in \N}$ appear infinitely many times.

The second application is devoted to the proof of transcendentality of the ordinary generating function of the sequence ${\bf h}$. Let
$$
H(x)=\sum_{n=0}^{\infty}h_{n}x^{n}.
$$

We prove the following

\begin{thm}
The series $H$ is transcendental over $\Q(x)$.
\end{thm}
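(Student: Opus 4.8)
Here is the plan for the proof.

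\medskip

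The plan is to derive a contradiction from the assumption that $H$ is algebraic over $\Q(x)$, by playing the $2$-automaticity of $(h_{n}\pmod*{3})_{n\in\N}$ (established above) against the theorems of Christol and Cobham. First I would note that, since every $h_{n}$ is an integer, $H\in\Z[[x]]$, so there is a primitive polynomial $P(x,y)\in\Z[x][y]$ with $P(x,H(x))=0$. Reducing its coefficients modulo $3$ gives $\overline{P}(x,y)\in\mathbb{F}_{3}[x][y]$; primitivity of $P$ guarantees $\overline{P}\neq 0$, and from $\overline{P}(x,\overline{H}(x))=0$, where $\overline{H}=\sum_{n\ge 0}(h_{n}\pmod*{3})\,x^{n}$, one sees that $\overline{P}$ must have positive degree in $y$ (otherwise $\overline{P}$ would be a nonzero element of $\mathbb{F}_{3}[x]$ vanishing as a power series). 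Hence $\overline{H}$ is algebraic over $\mathbb{F}_{3}(x)$, and Christol's theorem \cite{AS2} shows that $(h_{n}\pmod*{3})_{n\in\N}$ is $3$-automatic.

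Next I would invoke the theorem above, which tells us that $(h_{n}\pmod*{3})_{n\in\N}$ is also $2$-automatic. Since $2$ and $3$ are multiplicatively independent, Cobham's theorem \cite{AS2} then forces $(h_{n}\pmod*{3})_{n\in\N}$ to be eventually periodic. To close the argument I would show this cannot happen. By the Example above, $(h_{n}\pmod*{3})_{n\in\N}$ is the image of the PTM sequence $(t_{n})_{n\in\N}$ under the $16$-uniform morphism $\mu$ sending $1$ and $-1$ to the two displayed length-$16$ words; these words are distinct, so $\mu$ is injective on letters, hence on words of any fixed length. Comparing $\mu\big((t_{n})_{n}\big)$ block by block then shows that eventual periodicity of $(h_{n}\pmod*{3})_{n\in\N}$ would entail eventual periodicity of $(t_{n})_{n\in\N}$. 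But the PTM sequence is not eventually periodic --- for instance because $T(x)=\prod_{n\ge 0}(1-x^{2^{n}})$ is transcendental over $\Q(x)$ while an eventually periodic sequence has a rational generating function --- a contradiction. Therefore $H$ is transcendental over $\Q(x)$.

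I expect the genuinely substantial content to be the two imported results (Christol's theorem and Cobham's theorem), together with the routine fact that reduction modulo a prime preserves algebraicity over $\Z$; once $(h_{n}\pmod*{3})_{n\in\N}$ is known to be both $2$- and $3$-automatic, the non-periodicity that finishes the proof is supplied almost for free by the explicit morphic description from the Example, which itself rests on the triviality ($q=1$) of the ``cyclic'' part of the cascade automaton in the case $m=3$. If one instead wished to work with an arbitrary odd prime $p$, the same reduction--Christol--Cobham chain applies, but then the last step becomes the subtle one: one must rule out eventual periodicity of $(h_{n}\pmod*{p})_{n\in\N}$ directly, e.g.\ by showing it would force $(t_{n})_{n\in\N}$ to be eventually constant along some arithmetic progression, which is impossible.
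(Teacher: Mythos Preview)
Your proof is correct and follows a genuinely different route from the paper's.

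The paper argues via Mahler functions: it exhibits an explicit functional equation $p(x)G(x^{2})+q(x)G(x)+r(x)=0$ (with rather long polynomials $p,q,r$) for the series $G(x)=\sum_{n\ge 0}(h_{n}\bmod 3)\,x^{n}$, treated as an element of $\Z[[x]]\subset\C[[x]]$; Nishioka's theorem then forces $G$ to be either rational or transcendental over $\C(x)$, and rationality is ruled out by a further reduction modulo~$2$ and an irreducibility check. The contradiction with the assumed algebraicity of $H$ comes from the assertion that algebraicity of $H$ over $\Q(x)$ passes to this integer-coefficient lift $G$.

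You instead reduce all the way to $\mathbb{F}_{3}$ and stay there: the passage from algebraicity of $H$ over $\Q(x)$ to algebraicity of $\overline{H}$ over $\mathbb{F}_{3}(x)$ is the routine primitive-polynomial argument you sketch, after which Christol gives $3$-automaticity, the earlier theorem gives $2$-automaticity, Cobham forces eventual periodicity, and the explicit $16$-uniform morphism from the Example (injective on letters) transfers this to eventual periodicity of the PTM sequence---impossible. This is more conceptual and avoids both the long explicit polynomials and the delicate lift back to characteristic zero; the price is that you invoke Cobham's theorem, itself a deep result. The paper's approach, in exchange, produces the Mahler equation as a byproduct and rests on the Nishioka dichotomy instead. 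Your block-by-block decoding at the end is fine: an eventual period $p$ of $(v_{n})$ gives eventual period $16p$, hence $\mu(t_{k})=\mu(t_{k+p})$ for all large $k$, hence $t_{k}=t_{k+p}$.
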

\begin{proof}
The idea of the proof is simple. Let us suppose that $H$ is not transcendental over $\Q(x)$. This means that $H$ is algebraic over $\Q(x)$ and the same is true for the reduction modulo 3 of $H$. In other words, the power series
$$
G(x)=H(x)\pmod*{3}=\sum_{n=0}^{\infty}h_{n}\pmod*{3}x^{n}
$$
is algebraic over $\Q(x)$. However, the sequence $(v_{n})_{n\in\N}$, where $v_{n}=h_{n}\pmod*{3}$, is 2-automatic and one can check that the series $G$ satisfies the Mahler type functional equation $p(x)G(x^2)+q(x)G(x)+r(x)=0$, where
\begin{align*}
p(x)=&\;(1+x)(x^2-1)(1+x^4)(1+x^8)(1-x^{32})\times,\\
     &\;(2 x^{12}-3 x^{11}+2 x^{10}-x^9+x^8-2 x^7+x^6-2 x^5+4 x^4-4 x^3+2 x^2-1),\\
q(x)=&\;(x^{32}-1)\times \\
     &(2 x^{24}-3 x^{22}+2 x^{20}-x^{18}+x^{16}-2 x^{14}+x^{12}-2 x^{10}+4 x^8-4 x^6+2 x^4-1),\\
r(x)=&\;2 x^{57}-x^{56}+3 x^{55}+4 x^{54}-2 x^{53}-3 x^{52}+2 x^{51}-3 x^{50}-x^{49}+2 x^{48}\\
     &\;+5x^{47}+x^{46}-9 x^{45}+6 x^{43}-5 x^{42}-9 x^{41}+x^{40}+5 x^{39}+4 x^{38}-5 x^{37}\\
     &\;-3 x^{36}-5x^{34}+4 x^{32}-x^{30}-4 x^{29}-2 x^{28}+x^{27}-5 x^{26}-2 x^{25}+x^{24}+\\
     &\;-5x^{23}+x^{22}-x^{21}-x^{20}-4 x^{19}-3 x^{18}-2 x^{17}-x^{14}-x^{13}+2 x^{12}-4 x^{10}\\
     &\;+7 x^9+2x^8-6 x^7+3 x^5-3 x^4+x^3-x^2-x.
\end{align*}
Note that we treat the series $G$ as an element of $\Z[[x]]\subset \C[[x]]$. Let us recall the Nishioka theorem which says that if $f\in \C[[x]]$ satisfies the functional equation $f(x^{d})=\phi(x,f(x))$, where $\phi\in \C(x,y)$, then $f$ is rational or transcendental \cite{Ni1} (see also \cite{Ni2}). In our case $d=2$ and
$$
\phi(x,y)=-\frac{r(x)+q(x)y}{p(x)}.
$$
Thus, to get the result it is enough to prove that the series $G$ is not rational. To see this it is enough to consider the reduction modulo 2 of $G$, i.e., the series $F(x)=G(x)\pmod*{2}$. The series $F$ is an element of $\Z_{2}[[x]]$ and satisfies the algebraic equation $p(x)G(x)^2+q(x)G(x)+r(x)=0$ (this is a consequence of the congruence $G(x^2)\equiv G(x)^2\pmod*{2}$). However, it is easy to check that the polynomial $P(x,y)=p(x)y^2+q(x)+r(x)$ is irreducible as an element of $\Z_{2}[x][y]$ and hence the series $F$ cannot be rational over $\Z_{2}[x]$. In consequence the series $G$ is not rational and the Nishioka theorem implies its transcendentality over $\C(x)$. Thus, the same is true for the series $H$ and our theorem is proved.
\end{proof}

\section{Some identities}\label{sec4}

The Fibonacci sequence satisfies many interesting and sometimes unexpected identities. Most of them can be proved with the help of the Binet formula. One can ask whether our sequence $(h_{n})_{n\in\N}$ also satisfies some non-trivial identities. Of course, due to the fact that we do not have simple closed form of $h_{n}$ and behavior of our sequence is quite complicated, we cannot expect many such identities. However, we still are able to prove something interesting.

To start let us recall that the sequence $(t_{n})_{n\in\N}$ satisfies $t_{2n}=t_{n}$ and $t_{2n+1}=-t_{n}$. Thus, for $n\geq 5$ we have
\begin{align*}
\frac{h_{2n}-h_{2n-2}}{h_{2n-1}}&=t_{2n}=t_{n}=\frac{h_{n}-h_{n-2}}{h_{n-1}},\\
\frac{h_{2n+1}-h_{2n-1}}{h_{2n}}&=t_{2n+1}=-t_{n}=-\frac{h_{n}-h_{n-2}}{h_{n-1}}.
\end{align*}

As a consequence of the above equalities, we see that the sequence $(h_{n})_{n\in\N}$ can be defined without the use of the PTM sequence via the recurrence:
\begin{equation}\label{withoutPTM}
h_{2n}=\frac{h_{2n-1}}{h_{n-1}}(h_{n}-h_{n-2})+h_{2n-2},\quad h_{2n+1}=\frac{h_{2n}}{h_{n-1}}(h_{n-2}-h_{n})+h_{2n-1}.
\end{equation}
 It should also be stressed that exactly the same identities are satisfied by the Fibonacci sequence. Let us also note that if $F(p,q,r,s)=r^2-q^2+qs-pr$ then $F(h_{2n-2}, h_{2n-1}, h_{2n}, h_{2n+1})=0$ and a similar identity holds for the Fibonacci sequence.

\begin{lem}
For $n\in\N$ we have the following summation identities:
$$
\sum_{i=1}^{n}t_{i}h_{2i-1}=h_{2n},\quad \sum_{i=1}^{n}t_{i}h_{2i}=1-h_{2i+1}.
$$
\end{lem}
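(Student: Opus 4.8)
The plan is to prove both identities by recognizing each summand as a telescoping difference, so that no genuine induction is required; everything reduces to the defining recurrence together with the relations $t_{2i}=t_{i}$ and $t_{2i+1}=-t_{i}$.

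For the first identity I would start from
$$
h_{2i}=t_{2i}h_{2i-1}+h_{2i-2}=t_{i}h_{2i-1}+h_{2i-2},
$$
valid for every $i\geq 1$, which rearranges to $t_{i}h_{2i-1}=h_{2i}-h_{2i-2}$. Summing this over $i=1,\ldots,n$ makes the right-hand side telescope to $h_{2n}-h_{0}=h_{2n}$, which is exactly the claimed formula.

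For the second identity (which I read as $\sum_{i=1}^{n}t_{i}h_{2i}=1-h_{2n+1}$, the printed subscript $2i+1$ on the right presumably being a typo) I would likewise use
$$
h_{2i+1}=t_{2i+1}h_{2i}+h_{2i-1}=-t_{i}h_{2i}+h_{2i-1},
$$
so that $t_{i}h_{2i}=h_{2i-1}-h_{2i+1}$ for $i\geq 1$. Summing over $i=1,\ldots,n$ telescopes the right-hand side to $h_{1}-h_{2n+1}=1-h_{2n+1}$.

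The only things to check beyond this are cosmetic: that the index shifts in the two telescopes line up (they do, since the general term of the first sum is a difference of even-index terms two apart, and of the second a difference of odd-index terms two apart), and the degenerate case $n=0$, where both sums are empty and equal $0$ while $h_{0}=0$ and $1-h_{1}=0$. I do not expect any real obstacle here; the content is entirely contained in the two one-line rewrites of the recurrence above.
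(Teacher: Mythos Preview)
Your proposal is correct and follows essentially the same telescoping argument as the paper: the paper writes the general identities $r_{2n}-r_{0}=\sum_{i=1}^{n}a_{2i}r_{2i-1}$ and $r_{2n+1}-r_{1}=\sum_{i=1}^{n}a_{2i+1}r_{2i}$ for any recurrence $r_{n}=a_{n}r_{n-1}+r_{n-2}$, then specializes to $a_{n}=t_{n}$ and uses $t_{2i}=t_{i}$, $t_{2i+1}=-t_{i}$; you simply perform the specialization first and then telescope. You are also right that the printed $h_{2i+1}$ on the right-hand side of the second identity is a typo for $h_{2n+1}$.
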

\begin{proof}
The above formulas follow from the general identities:
\begin{align*}
r_{2n}-r_{0}&=\sum_{i=1}^{n}(r_{2i}-r_{2i-2})=\sum_{i=1}^{n}a_{2i}r_{2i-1},\\
r_{2n+1}-r_{1}&=\sum_{i=1}^{n}(r_{2i+1}-r_{2i-1})=\sum_{i=1}^{n}a_{2i+1}r_{2i},
\end{align*}
which hold for the sequence $(r_{n})_{n\in\N}$ governed by the recurrence $r_{n}=a_{n}r_{n-1}+r_{n-2}$. In our case we have $a_{n}=t_{n}, r_{0}=h_{0}=0, r_{1}=h_{1}=1$, and hence the result.
\end{proof}

One among many interesting properties of the Fibonacci sequence $(f_{n})_{n\in\N}$ is the identity $f_{2n+1}=f_{n}^2+f_{n+1}^2$ which holds for all $n\in\N$. As a consequence we know that all prime factors of $f_{2n+1}$ are congruent to $1\pmod*{4}$ and these which are congruent to $3\pmod*{4}$ appear with even exponents. Although such a general result is not true for $h_{2n+1}$ we were able to get the following.

\begin{thm}
We have the following identities
$$
h_{2^{2k+1}-1}^2+h_{2^{2k+1}+1}^2=h_{2^{2k+1}-2}^2+h_{2^{2k+1}-1}^2=h_{2^{2k+2}-3}.
$$
\end{thm}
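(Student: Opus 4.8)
The plan is to split the asserted chain into its two equalities and treat each one separately: the first is an immediate consequence of Lemma~\ref{lem:tools}(1), and the second is a ``squaring'' identity for a product of transfer matrices. For the first equality, note that since squares are involved, $h_{2^{2k+1}-1}^2+h_{2^{2k+1}+1}^2=h_{2^{2k+1}-2}^2+h_{2^{2k+1}-1}^2$ is equivalent to $h_{2^{2k+1}+1}^2=h_{2^{2k+1}-2}^2$. I would get this from Lemma~\ref{lem:tools}(1) applied with $n=2^{2k}$, which gives $h_{2^{2k+1}+1}=t_{2^{2k+1}+1}\,h_{2^{2k+1}-2}$; as $2^{2k+1}+1$ has exactly two $1$'s in its binary expansion, $t_{2^{2k+1}+1}=1$, so in fact $h_{2^{2k+1}+1}=h_{2^{2k+1}-2}$.

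\emph{Matrix encoding of the second equality.} Set $h_{-1}:=1$ (consistent with the recurrence at $n=1$) and
\[
M_n:=\begin{pmatrix} t_n & 1\\ 1 & 0\end{pmatrix},\qquad P_n:=M_nM_{n-1}\cdots M_2 .
\]
A one-line induction shows $P_n=\begin{pmatrix} h_n & h_{n-1}\\ h_{n-1} & h_{n-2}\end{pmatrix}$ for all $n\ge 1$ (with $P_1$ the empty product, the identity matrix). Write $m:=2^{2k+1}-1$, so that $2m-1=2^{2k+2}-3$ and $m-1=2^{2k+1}-2$. Then the equality $h_{2^{2k+2}-3}=h_{2^{2k+1}-2}^2+h_{2^{2k+1}-1}^2$ is exactly the $(1,1)$-entry of the matrix identity $P_{2m-1}=P_m^2$, so it suffices to prove the latter.

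\emph{The squaring identity.} Factor $P_{2m-1}=(M_{2m-1}\cdots M_{m+1})\,P_m$, where each of the two factors is a product of $m-1=2^{2k+1}-2$ matrices. Since $m+1=2^{2k+1}$, for $0\le j\le 2^{2k+1}-3$ one has $s_2(2^{2k+1}+j)=1+s_2(j)$, hence $t_{2^{2k+1}+j}=-t_j$; reading $M_{2m-1}\cdots M_{m+1}$ from left to right, its factors carry the values $t=-t_{2^{2k+1}-3},-t_{2^{2k+1}-4},\dots,-t_1,-t_0$. On the other hand, because the exponent $2k+1$ is odd, the Prouhet--Thue--Morse anti-palindrome identity $t_{2^a-1-j}=(-1)^a t_j$ (valid for $0\le j<2^a$) gives $t_{m-j}=-t_j$, so reading $P_m=M_m\cdots M_2$ from left to right its factors carry the values $t=-t_0,-t_1,\dots,-t_{2^{2k+1}-3}$. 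Hence $M_{2m-1}\cdots M_{m+1}$ uses exactly the same matrices as $P_m$ but multiplied in the reverse order; since every $M_n$ is symmetric, this reversed product equals $P_m^{\,{\mathsf T}}$, and $P_m$ itself is symmetric, so $M_{2m-1}\cdots M_{m+1}=P_m$ and therefore $P_{2m-1}=P_m^2$. Reading off the top-left entry gives $h_{2m-1}=h_m^2+h_{m-1}^2$, i.e.\ the second equality. (For $k=0$ this is the degenerate case $m=1$, $P_1=I$, which is trivial; alternatively one checks it directly against $h_1,h_2,h_3$.)

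\emph{Main obstacle.} There is no analytic content here: the whole argument reduces to the digit-sum identities $t_{2^a+j}=-t_j$ and $t_{2^a-1-j}=(-1)^a t_j$ for $0\le j<2^a$, together with symmetry of the transfer matrices. The only thing that needs real care is the index bookkeeping, and in particular the observation that it is the \emph{oddness} of $2k+1$ (making $(-1)^{2k+1}=-1$) that forces the two lists of $t$-values above to be reverses of one another; for an even exponent they would differ by an overall sign, the identity $M_{2m-1}\cdots M_{m+1}=P_m$ would fail, and the clean factorization $P_{2m-1}=P_m^2$ would break down. So the argument hinges entirely on lining up these ranges of indices correctly.
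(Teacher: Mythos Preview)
Your treatment of the first equality via Lemma~\ref{lem:tools}(1) is correct and is exactly what the paper does.

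For the second equality your transfer-matrix approach is a genuinely different and cleaner route than the paper's (which tracks a $4\times 4$ block matrix $a_n$ encoding two coupled recurrences and proves closed-form relations among its eight entries by a separate induction). However, your argument contains a real error. The ``one-line induction'' does \emph{not} give
\[
P_n=\begin{pmatrix} h_n & h_{n-1}\\ h_{n-1} & h_{n-2}\end{pmatrix}.
\]
The inductive step yields the $(1,2)$-entry $t_{n+1}h_{n-1}+h_{n-2}$, and this equals $h_n$ only when $t_{n+1}=t_n$; in general $P_n$ is \emph{not} symmetric. (Concretely, for $k=1$ one has $m=7$ and $P_7=\begin{psmallmatrix}1&0\\-2&1\end{psmallmatrix}$, so $P_7^2=\begin{psmallmatrix}1&0\\-4&1\end{psmallmatrix}$, whereas $P_{13}=\begin{psmallmatrix}5&-2\\-2&1\end{psmallmatrix}$; thus $P_{2m-1}\neq P_m^2$.) What the induction \emph{does} give is that the first column of $P_n$ is $(h_n,h_{n-1})^{\mathsf T}$.

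The good news is that your PTM reflection argument is intact and already proves what you need. You correctly show that $M_{2m-1}\cdots M_{m+1}$ is the product of the same symmetric factors as $P_m$ in the reverse order, hence equals $P_m^{\mathsf T}$. Therefore
\[
P_{2m-1}=P_m^{\mathsf T}P_m,
\]
and the $(1,1)$-entry of $P_m^{\mathsf T}P_m$ is the squared Euclidean norm of the \emph{first column} of $P_m$, namely $h_m^2+h_{m-1}^2$. That is exactly $h_{2m-1}=h_{2^{2k+2}-3}$. So drop the false symmetry claim, replace $P_m^2$ by $P_m^{\mathsf T}P_m$, and read off the $(1,1)$-entry as a column norm; then the proof is complete and is arguably more transparent than the paper's.
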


\begin{proof}
Consider any sequence $(r_n)_{n\in \N}$ which satisfies the recurrence $r_n = t_nr_{n-1} + r_{n-2}$; then (after extending domain of indexes to $\N \cup \{-2,-1\}$) any term of the sequence $(r_{n})_{n\in\N}$ can be expressed as a linear combination of $r_{-2}, r_{-1}$. A similar statement holds for any sequence $(r'_n)_{n\in \N}$ following the recurrence $r'_n = -t_nr'_{n-1} + r'_{n-2}$. Hence, we can define a sequence $(a_n)_{n\in \N}$ with values being 4-by-4 matrices of the form
$$a_n = \begin{bmatrix}a_{n,1}&a_{n,2}&0&0\\ a_{n,3}&a_{n,4}&0&0 \\ 0&0&a_{n,5}&a_{n,6}\\ 0&0&a_{n,7}&a_{n,8}\end{bmatrix}$$
such that
$$[r_{2^n-1}, r_{2^n-2}, r'_{2^n-1}, r'_{2^n-2}]^T = a_n \cdot [r_{-1}, r_{-2}, r'_{-1}, r'_{-2}]^T.$$
Those matrices are independent of $(r_n)_{n\in \N \cup \{-2,-1\}},(r'_n)_{n\in \N \cup \{-2,-1\}}$ as long as the recurrences hold. Since for $k \in \{0,\ldots,2^n-1\}$ we have $t_{2^n+k} = -t_k$, the linear combination describing $r_{2^{n+1}-1}$ in terms of $r_{2^n-2}, r_{2^n-1}$ must be the same as linear combination describing $r'_{2^n-1}$ in terms of $r'_{-2}, r'_{-1}$. By a similar argument we have the equality

$$\begin{bmatrix} r_{2^{n+1}-1}\\ r_{2^{n+1}-2}\\ r'_{2^{n+1}-1}\\ r'_{2^{n+1}-2}\end{bmatrix} = \begin{bmatrix}a_{n,5}&a_{n,6}&0&0\\ a_{n,7}&a_{n,8}&0&0 \\ 0&0&a_{n,1}&a_{n,2}\\ 0&0&a_{n,3}&a_{n,4}\end{bmatrix} \cdot \begin{bmatrix}r_{2^n-1}\\ r_{2^n-2}\\ r'_{2^n-1}\\ r'_{2^n-2}\end{bmatrix},$$
hence
$$a_{n+1} = J a_n J a_n, \text{ where } J =\begin{bmatrix}0&1&0&0\\ 1&0&0&0 \\ 0&0&0&1\\ 0&0&1&0\end{bmatrix}.$$

By a direct computation we get the equalities
\begin{align*}
    a_{n+1,1} &=a_{n,1}a_{n,5}+a_{n,3}a_{n,6},\\
    a_{n+1,2} &=a_{n,2}a_{n,5}+a_{n,4}a_{n,6},\\
    a_{n+1,3} &=a_{n,1}a_{n,7}+a_{n,3}a_{n,8},\\
    a_{n+1,4} &=a_{n,2}a_{n,7}+a_{n,4}a_{n,8},\\
    a_{n+1,5} &=a_{n,1}a_{n,5}+a_{n,2}a_{n,7},\\
    a_{n+1,6} &=a_{n,1}a_{n,6}+a_{n,2}a_{n,8},\\
    a_{n+1,7} &=a_{n,3}a_{n,5}+a_{n,4}a_{n,7},\\
    a_{n+1,8} &=a_{n,3}a_{n,6}+a_{n,4}a_{n,8},
\end{align*}
and
$$a_0 = \begin{bmatrix}1&1&0&0\\ 1&0&0&0 \\ 0&0&-1&1\\ 0&0&1&0\end{bmatrix}.$$
It follows by a simple induction that for $n>0$ we have
\begin{align*}
    a_{n,5} &= a_{n,1},\\
    a_{n,8} &= a_{n,4},\\
    a_{n,6} &= -a_{n,2},\\
    a_{n,3} &= (-1)^na_{n,2},\\
    a_{n,7} &= (-1)^{n+1}a_{n,2}.
\end{align*}
Using the relations above we get simplified recurrences for remaining elements of $a_{n+1}$:
\begin{align*}
    a_{n+1,1} &=a_{n,1}^2+(-1)^{n+1}a_{n,2}^2,\\
    a_{n+1,2} &=a_{n,2}(a_{n,1}-a_{n,4}),\\
    a_{n+1,4} &=a_{n,4}^2+(-1)^{n+1}a_{n,2}^2.
\end{align*}
So far we obtained some information about $(a_n)_{n\in N}$ for general $(r_n)_{n\in \N}, (r'_n)_{n\in \N}$.
For $r_{-2}=-1, r_{-1}=1$ we have $(r_n)_{n\in \N} = (h_n)_{n\in \N}$ so
\begin{align*}
h_{2^{2k+1}-1} &= a_{2k+1,1}-a_{2k+1,2},\\
h_{2^{2k+1}-2} &= a_{2k+1,3}-a_{2k+1,4}=-a_{2k+1,2}-a_{2k+1,4},\\
h_{2^{2k+2}-1} &= a_{2k+2,1}-a_{2k+2,2},\\
h_{2^{2k+2}-2} &= a_{2k+2,3}-a_{2k+2,4}=a_{2k+2,2}-a_{2k+2,4}.
\end{align*}
From properties of Thue-Morse sequence $t_{2^{2k+2}-1}=1$ hence
\begin{align*}
    h_{2^{2k+2}-3}&=h_{2^{2k+2}-1}-h_{2^{2k+2}-2}\\
    &=a_{2k+2,1}-a_{2k+2,2}-a_{2k+2,2}+a_{2k+2,4}\\
    &=-2a_{2k+1,2}(a_{2k+1,1}-a_{2k+1,4})+a_{2k+1,1}^2+a_{2k+1,4}^2+2a_{2k+1,2}^2\\
    &=(a_{2k+1,1} - a_{2k+1,2})^2 + (a_{2k+1,4} + a_{2k+1,2})^2\\
    &=h_{2^{2k+1}-1}^2 + h_{2^{2k+1}-2}^2.
\end{align*}
The other identity follows from Lemma \ref{lem:tools}(1).
\end{proof}

We turn our attention to the behavior of (kind of) a continued fraction expansion of $h_{n}/h_{n-1}$ for $n\geq 5$. It is well known that $f_{n}/f_{n-1}=[1;1,\ldots,1]$, where we have $n-1$ appearances of $1$'s in the expansion. Our information for the fraction $h_{n}/h_{n-1}$ is not so precise but our initial result is the following.

\begin{prop}
For $n\in\N_{\geq 5}$ we have the following identity:
$$
\frac{h_{n}}{h_{n-1}}=[t_{n};t_{n-1},\ldots, t_{5}].
$$
\end{prop}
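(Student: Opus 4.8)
The plan is to prove this by a direct induction on $n$, reading off the expansion from the recurrence $h_{n}=t_{n}h_{n-1}+h_{n-2}$ itself. First I would fix the convention for the bracket: here $[a_{0};]=a_{0}$ and $[a_{0};a_{1},\ldots,a_{r}]=a_{0}+1/[a_{1};a_{2},\ldots,a_{r}]$, with the understanding that the partial quotients $t_{j}$ are allowed to equal $-1$, so this is a continued fraction with entries in $\{-1,1\}$ rather than a classical regular one. Next I would record that everything is well defined: by Lemma~\ref{lem:tools}(3) we have $u_{k}=\op{sign}(h_{k})\neq 0$, hence $h_{k}\neq 0$, for every $k\geq 4$ (if $k=2n$ then $n\geq 2\neq 0$, so $u_{k}=-1$; if $k=2n+1$ then $n\geq 2\neq 1$, so $u_{k}=t_{n}=\pm1$), so each ratio $h_{n}/h_{n-1}$ with $n\geq 5$ is a well-defined nonzero rational number.

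For the base case $n=5$ one simply computes $h_{4}=h_{5}=-1$, so $h_{5}/h_{4}=1$, while $[t_{5}]=t_{5}=1$ because $s_{2}(5)=2$. For the inductive step I would assume $n\geq 6$ and $h_{n-1}/h_{n-2}=[t_{n-1};t_{n-2},\ldots,t_{5}]$; since the left-hand side is a nonzero rational it can be inverted, and dividing the recurrence by $h_{n-1}$ gives
$$
\frac{h_{n}}{h_{n-1}}=t_{n}+\frac{h_{n-2}}{h_{n-1}}=t_{n}+\frac{1}{h_{n-1}/h_{n-2}}=t_{n}+\frac{1}{[t_{n-1};t_{n-2},\ldots,t_{5}]}=[t_{n};t_{n-1},\ldots,t_{5}],
$$
which is exactly the claim for $n$. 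Note that this same induction shows in passing that every truncation $[t_{k};t_{k-1},\ldots,t_{5}]$ with $k\geq 5$ is well defined and nonzero (being equal to $h_{k}/h_{k-1}$), so no division by zero is ever met when the right-hand side is unfolded from the innermost entry outward.

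The hard part here is essentially nil: the identity is just the statement that the M\"obius map $c\mapsto t_{n}+1/c$ carries $h_{n-1}/h_{n-2}$ to $h_{n}/h_{n-1}$, so the only genuine ingredient beyond the recurrence is the non-vanishing of $h_{k}$ for $k\geq 4$, which is precisely what Lemma~\ref{lem:tools}(3) supplies. The one point that warrants a sentence of care in the write-up is guaranteeing that none of the intermediate convergents $[t_{k};\ldots,t_{5}]$ vanishes, and that too is handled by the same sign description.
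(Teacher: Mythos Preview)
Your proposal is correct and follows essentially the same approach as the paper: induction on $n$ with base case $n=5$, the inductive step obtained by dividing the recurrence $h_{n}=t_{n}h_{n-1}+h_{n-2}$ by $h_{n-1}$, and the well-definedness justified via the non-vanishing of $h_{k}$ for $k\geq 4$ from Lemma~\ref{lem:tools}(3). Your write-up is in fact slightly more careful about the bracket convention and the intermediate convergents, but the argument is the same.
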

\begin{proof}
We know that $h_{n}\neq 0$ for $n\geq 4$ thus the value of $h_{n}/h_{n-1}$ for $n\geq 5$ is well defined. We proceed by induction on $n$. The statement is clearly true for $n=5$. Let us suppose that
$$
\frac{h_{n}}{h_{n-1}}=[t_{n};t_{n-1},\ldots, t_{5}],
$$
for some $n\geq 5$. We have $h_{n+1}=t_{n+1}h_{n}+h_{n-1}$ and thus
\begin{align*}
\frac{h_{n+1}}{h_{n}}&=t_{n+1}+\frac{h_{n-1}}{h_{n}}=t_{n+1}+\frac{1}{\frac{h_{n}}{h_{n-1}}}\\
                     &=t_{n+1}+\frac{1}{[t_{n};t_{n-1},\ldots, t_{5}]}=[t_{n+1};t_{n},t_{n-1},\ldots, t_{5}]
\end{align*}
and hence the result.
\end{proof}
\begin{rem}
Essentially a similar statement is true for a general recurrence $r_{n}=s_{n}r_{n-1}+r_{n-2}$ provided that $r_{n}\neq 0$ for $n$ sufficiently large. More precisely, if $r_{n}\neq 0$ for $n\geq N$ then one can write
$$
\frac{r_{n}}{r_{n-1}}=[s_{n};s_{n-1},\ldots,s_{N}].
$$

\end{rem}

The above result is not the best possible because we deal with a non-regular continued fraction. Although we were unable to obtain precise value of digits in the regular continued fraction expansion of $|h_{n}|/|h_{n-1}|$ we were able to get the following.

\begin{thm}\label{confra}
Let $n\in\N_{\geq 5}$. The continued fraction expansion of $|h_{n}|/|h_{n-1}|$ is of the form $[a(n);\varepsilon_{1}(n),\ldots,\varepsilon_{k}(n)]$, where $a(n)\in\{0, 1, 2, 3, 4\}$, and $\varepsilon_{i}(n)\in\{1, 2, 3\}$ for each $i\in\{1, \ldots, k_{n}\}$.
\end{thm}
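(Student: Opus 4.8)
The plan is to prove the bound on partial quotients by relating the regular continued fraction of $|h_n|/|h_{n-1}|$ to the ``signed'' continued fraction $[t_n; t_{n-1}, \ldots, t_5]$ from the previous proposition, and to control what happens when we normalize the $\pm 1$ entries into an honest regular continued fraction. First I would record the elementary identities that let one rewrite a finite continued fraction with a $-1$ partial quotient in terms of ones with positive entries; concretely, using $[\ldots, a, -1, b, \ldots] = [\ldots, a-1, 1, b-1, \ldots]$ and $[\ldots, a, 1, -1, c, \ldots]$-type reductions, together with the fact that a leading sign can be pulled out as $|h_n|/|h_{n-1}|$. The point is that each maximal block of consecutive $-1$'s in the sequence $(t_n, t_{n-1}, \ldots, t_5)$, when cleared, contributes only finitely many new partial quotients, all lying in a bounded range.

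The key structural input is that the PTM sequence is \emph{cube-free}: there are no three consecutive equal terms $t_j = t_{j-1} = t_{j-2}$. Hence in the word $t_n t_{n-1} \cdots t_5$ read in either direction, the maximal runs of equal symbols have length $1$ or $2$. This is exactly the combinatorial fact that keeps the normalization local: when converting $[t_n; t_{n-1}, \ldots, t_5]$ to its regular form, each short run of $+1$'s or $-1$'s gets absorbed into a bounded modification of its neighbors. I would make this precise by an induction on $n$ (building the regular continued fraction of $|h_{n+1}|/|h_n|$ from that of $|h_n|/|h_{n-1}|$ using $h_{n+1} = t_{n+1}h_n + h_{n-1}$), carrying as the induction hypothesis not merely the stated bounds $a(n) \in \{0,1,2,3,4\}$ and $\varepsilon_i(n) \in \{1,2,3\}$ but a slightly stronger invariant that also remembers the last one or two partial quotients and the value of $t_n, t_{n-1}$, so that the case analysis for the next step closes. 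The base case $n = 5$ (indeed a handful of small cases) is checked by direct computation.

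The main obstacle will be organizing the case analysis cleanly. Multiplying $h_n/h_{n-1}$ by a sign and prepending a new term $t_{n+1}$ to a regular continued fraction is straightforward when $t_{n+1} = +1$ and the current leading quotient is positive, but when $t_{n+1} = -1$, or when a previous step has left a $1$ at the front that now collides with the new entry, one must apply reduction identities that ripple one or two positions into the expansion. Bounding this ripple, and checking it never pushes a partial quotient above $3$ (or the leading one above $4$), is where cube-freeness of $\mathbf{t}$ must be invoked at just the right moment; I expect the proof to require tracking on the order of a dozen local configurations of $(t_{n+1}, t_n, t_{n-1})$ against the tail of the expansion, each resolved by a short explicit computation. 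Once the bookkeeping invariant is chosen correctly, each case is routine; choosing that invariant so that all cases close simultaneously is the real work.
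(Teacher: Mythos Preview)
Your overall inductive plan---carry a strengthened invariant, step from $|h_{n-1}|/|h_{n-2}|$ to $|h_n|/|h_{n-1}|$ via the recurrence, and invoke cube-freeness of $\mathbf{t}$ to close the case analysis---is the right shape, and it matches the paper's proof in broad structure. The paper, however, chooses a different and cleaner invariant than the one you propose. Rather than tracking the last one or two partial quotients of the regular expansion together with recent $t$-values, the paper tracks the \emph{sign pattern} of $(h_n,h_{n-1},h_{n-2})$ together with $t_n$, and for each of the seven possible configurations records a sharper range for $a(n)$ (e.g.\ $a(n)\in\{0,1,2\}$ in some cases, $a(n)=0$ in another, $a(n)\in\{1,2,3,4\}$ in others). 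This pays off because the sign behaviour of $\mathbf{h}$ is already completely determined by Lemma~\ref{lem:tools}, so the case split is canonical and each case reduces to a one-line identity such as $|h_n|/|h_{n-1}|=|h_{n-2}|/|h_{n-1}|-1$ or $|h_n|/|h_{n-1}|=1+(|h_{n-1}|/|h_{n-2}|)^{-1}$; these show directly how the regular expansion of the new ratio arises from the previous one, with no explicit ``$-1$-clearing'' rewriting needed.

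By contrast, your route via the signed continued fraction $[t_n;t_{n-1},\ldots,t_5]$ and local reduction rules would require getting those rules exactly right---note that the identity you quote, $[\ldots,a,-1,b,\ldots]=[\ldots,a-1,1,b-1,\ldots]$, is not correct as stated---and then bounding how far each rewrite ripples. This can be made to work, but the bookkeeping is heavier, and once you unwind what ``the last one or two partial quotients'' can actually be you would likely rediscover the sign-pattern case split anyway. The paper's invariant short-circuits this by using the sign information already established earlier.
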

\begin{proof}
We will prove a slightly stronger version, namely to the above we add
$$a(n)\in
\begin{cases}
\{0,1,2\} & \text{ for } h_n<0, h_{n-1}>0, t_n=1, \\
\{0,1,2\} & \text{ for } h_n<0, h_{n-1}<0, h_{n-2}>0, \\
\{0,1,2\} & \text{ for } h_n<0, h_{n-1}<0, h_{n-2}<0, t_n=-1, \\
\{0\} & \text{ for } h_n>0, h_{n-2}<0, \\
\{1,2,3,4\} & \text{ for } h_n<0, h_{n-1}>0, t_n=-1, \\
\{1,2,3,4\} & \text{ for } h_n<0, h_{n-1}<0, h_{n-2}<0, t_n=1,\\
\{1,2\} & \text{ for } h_n>0, h_{n-2}>0.
\end{cases}$$
For $n<10$ our claim can be verified directly. Assume it is true for all $5\le N<n$; then for $n$ we have seven cases to consider (as stated above). We perform case by case analysis as follows:
\begin{itemize}
    \item If $h_n<0, h_{n-1}>0, t_n=1$, then $h_n = h_{n-1}+h_{n-2}=|h_{n-1}|-|h_{n-2}|$ and $\frac{|h_n|}{|h_{n-1}|}=\frac{|h_{n-2}|-|h_{n-1}|}{|h_{n-1}|}=\frac{|h_{n-2}|}{|h_{n-1}|}-1$. We know that $\frac{|h_{n-1}|}{|h_{n-2}|}>\frac{1}{4}$ as its expansion contains only digits $\in \{0,1,2,3\}$. Because $|h_{n-1}|<|h_{n-2}|$, the continued fraction of $\frac{|h_{n-2}|}{|h_{n-1}|}$ is the same as of $\frac{|h_{n-1}|}{|h_{n-2}|}$ without the leading zero. Hence, $a_n<3$ and no digits greater than $3$ appear in the expansion.

    \item If $h_n<0, h_{n-1}<0, h_{n-2}>0$, then as above $\frac{|h_n|}{|h_{n-1}|}=\frac{|h_{n-2}|}{|h_{n-1}|}-1$.

    \item The case when $h_n<0, h_{n-1}<0, h_{n-2}<0, t_n=-1$ leads to the same conclusion using the same arguments as above.

    \item Tf $h_n>0, h_{n-2}<0$ then using Lemma \ref{lem:tools} we get $2 \nmid n, t_n=-1, t_{n-2}=1, h_{n-1} <0, h_{n-3}<0, |h_n|=|h_{n-3}|$. Then $h_{n-1}=h_{n-2}+h_{n-3}, |h_{n-1}|=|h_{n-2}|+|h_{n-3}|$ and
    $$\frac{|h_n|}{|h_{n-1}|}=\frac{|h_{n-3}|}{|h_{n-2}| + |h_{n-3}|}=\frac{1}{1+\frac{|h_{n-2}|}{|h_{n-3}|}}.$$
    Hence $a(n)=0$ and as $a(n-2)\le 2$ there are no digits greater than $3$ in the continued fraction of $\frac{|h_n|}{|h_{n-1}|}$.

    \item When $h_n<0, h_{n-1}>0, t_n=-1 $ we have $h_n = -h_{n-1}+h_{n-2}= -|h_{n-1}|-|h_{n-2}|$, so
     $$\frac{|h_n|}{|h_{n-1}|}=\frac{|h_{n-1}| + |h_{n-2}|}{|h_{n-1}|}=1 + \frac{1}{\frac{|h_{n-1}|}{|h_{n-2}|}}.$$
     As $h_{n-1}>0$ we have $a(n-1) < 4$, so no digits greater than 3 can appear in this continued fraction.

     \item In the case $h_n<0, h_{n-1}<0, h_{n-2}<0, t_n=1$, we again have
     $\frac{|h_n|}{|h_{n-1}|}=1 + \frac{1}{\frac{|h_{n-1}|}{|h_{n-2}|}}$. This time $a(n-1)$ cannot be equal to $4$ because $h_{n-2}>0$ and due to Lemma \ref{lem:tools} $t_{n-1}, t_n$ cannot be both equal to $1$ when $h_n, h_{n-1}$ are negative.

     \item The last possibility is $h_n>0, h_{n-2}>0$; again  $\frac{|h_n|}{|h_{n-1}|}=1 + \frac{1}{\frac{|h_{n-1}|}{|h_{n-2}|}}$. From Lemma \ref{lem:tools} it follows that $2\nmid n, t_n=-1, t_{n-1}=1, h_{n-1}<0$, so $a(n-1)<3$ and again no digits greater than 3 can appear in the expansion of $\frac{|h_n|}{|h_{n-1}|}$. We still need to show that $a(n)<3$.
     From Lemma~\ref{lem:tools} we can get $t_{n-4}=1, h_{n-4}<0$ as $h_{n-2}>0, h_n>0$. Moreover, we know that the PTM sequence $(t_n)_{n\in \N}$ has no three consecutive 1's. Repeating calculations for proper cases we get
     \begin{align*}
     \frac{|h_{n-1}|}{|h_{n-2}|}&=\frac{|h_{n-3}|}{|h_{n-2}|}-1,\\
     \frac{|h_{n-2}|}{|h_{n-3}|}&=\frac{1}{1+\frac{|h_{n-4}|}{|h_{n-5}|}},\\
     \frac{|h_n|}{|h_{n-1}|} &= 1+\frac{|h_{n-5}|}{|h_{n-4}|}.
     \end{align*}
     If $a(n)>2$ then necessarily $a(n-4)=0$. This equality implies $h_{n-6}>0$ (we fall into the second case) and
     $$\frac{|h_{n-4}|}{|h_{n-5}|}=\frac{|h_{n-6}|}{|h_{n-5}|}-1.$$
     Again one can infer from properties of the PTM sequence that $t_{n-5} = -1$ so $a(n-5) \in \{1,2,3,4\}$, which would lead to the negative value of $\frac{|h_{n-4}|}{|h_{n-5}|}$, a contradiction.

\end{itemize}
\end{proof}

\section{Questions and conjectures}\label{sec5}

In this section we collect some questions and conjectures which appeared during our investigations.

Let us return for a moment to identities (\ref{withoutPTM}). As we observed, the same identities are satisfied by the Fibonacci sequence. This is a bit surprising and one can formulate the following

\begin{ques}
Let $k\in\N_{\geq 2}$ be given and put $X_{i}=\{x_{i,1},\ldots, x_{i,m_{i}}\}$ for $i=1, \ldots, k$. Characterize those polynomials $F\in\Z[X_{1},\ldots, X_{k}]$ satisfying the following:
\begin{align*}
\forall n\in\N:\;F&(f_{n},\ldots, f_{n+i_{1}},\ldots,f_{kn},\ldots, f_{kn+i_{k}})=0\\
                  &\;\Longrightarrow\;F(h_{n},\ldots, h_{n+i_{1}},\ldots,h_{kn},\ldots, h_{kn+i_{k}})=0.
\end{align*}
\end{ques}

Note that $f_{2n}\equiv 0\pmod*{f_{n}}$ for $n\in\N_{+}$. Although a similar property is not true in the case of the sequence $(h_{n})_{n\in\N}$ our numerical calculations suggest the following.

\begin{conj}
The following congruences are true:
\begin{align*}
&h_{2(2^{2n}-3)}\equiv 0\pmod*{h_{2^{2n}-3}},\;n\in\N_{+},\\
&h_{2(2^{2n-1}-3)}\equiv -2\pmod*{h_{2^{2n+1}-3}},\;n\in\N_{\geq 2}.
\end{align*}
\end{conj}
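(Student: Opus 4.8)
The plan is to derive both congruences from the single uniform statement
\begin{equation*}
h_{2(2^{k}-3)}\equiv (-1)^{k}-1\pmod*{h_{2^{k}-3}},\qquad k\in\N_{\ge 2},
\end{equation*}
whose case $k=2n$ is the first congruence and whose case $k=2n+1$ is the second; in particular this forces a minor correction, namely the modulus in the second displayed congruence should read $h_{2^{2n-1}-3}$ so as to match the index on the left.

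The first ingredient is a ``folding'' identity linking the index $2(2^{k}-3)$ to $2^{k+1}-3$. Writing $2^{k+1}-3=2m+1$ with $m=2^{k}-2\in\N_{+}$ (valid for $k\ge 2$), Lemma~\ref{lem:tools}(1) gives $h_{2^{k+1}-3}=t_{2^{k+1}-3}\,h_{2m-2}=t_{2^{k+1}-3}\,h_{2(2^{k}-3)}$; since $2^{k+1}-3=(2^{k+1}-1)-2$ has binary digit sum $k$, we get $t_{2^{k+1}-3}=(-1)^{k}$ and hence $h_{2(2^{k}-3)}=(-1)^{k}h_{2^{k+1}-3}$. Thus it suffices to prove that $h_{2^{k}-3}\mid h_{2^{k+1}-3}$ when $k$ is even and that $h_{2^{k+1}-3}\equiv 2\pmod*{h_{2^{k}-3}}$ when $k$ is odd.

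The second ingredient is the matrix machinery used to prove the theorem of Section~\ref{sec4} asserting $h_{2^{2k+1}-1}^{2}+h_{2^{2k+1}-2}^{2}=h_{2^{2k+2}-3}$. Abbreviating $P=a_{k,1}$, $Q=a_{k,2}$, $R=a_{k,4}$, the construction in that proof gives $h_{2^{k}-1}=P-Q$ and $h_{2^{k}-2}=(-1)^{k}Q-R$, and since $t_{2^{k}-1}=(-1)^{k}$ the recurrence for ${\bf h}$ yields $h_{2^{k}-3}=h_{2^{k}-1}-(-1)^{k}h_{2^{k}-2}=P-2Q+(-1)^{k}R$. One further needs the normalization $a_{k,1}a_{k,4}-(-1)^{k}a_{k,2}^{2}=1$ for $k\ge 1$, which is not recorded there: setting $D_{n}=a_{n,1}a_{n,4}-(-1)^{n}a_{n,2}^{2}$, the simplified recurrences $a_{n+1,1}=a_{n,1}^{2}+(-1)^{n+1}a_{n,2}^{2}$, $a_{n+1,2}=a_{n,2}(a_{n,1}-a_{n,4})$, $a_{n+1,4}=a_{n,4}^{2}+(-1)^{n+1}a_{n,2}^{2}$ give $D_{n+1}=D_{n}^{2}$ in one line, and $D_{1}=1$ by direct evaluation.

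It then remains to split on the parity of $k$. If $k$ is even, one step of the simplified recurrences together with $t_{2^{k+1}-1}=-1$ gives
\begin{align*}
h_{2^{k+1}-3}&=h_{2^{k+1}-1}+h_{2^{k+1}-2}=(P-Q)^{2}-(Q-R)^{2}\\
&=\big(P-2Q+R\big)\big(P-R\big)=h_{2^{k}-3}(P-R),
\end{align*}
so $h_{2^{k}-3}\mid h_{2^{k+1}-3}$ and, after folding, $h_{2(2^{k}-3)}\equiv 0$. If $k$ is odd (so $k+1$ is even), the Section~\ref{sec4} theorem gives $h_{2^{k+1}-3}=h_{2^{k}-1}^{2}+h_{2^{k}-2}^{2}=(P-Q)^{2}+(Q+R)^{2}$; the normalization $PR+Q^{2}=1$ lets one rewrite $(P-Q)^{2}=1+P(P-2Q-R)$ and $(Q+R)^{2}=1-R(P-2Q-R)$, whence $h_{2^{k+1}-3}=2+(P-R)\,h_{2^{k}-3}\equiv 2\pmod*{h_{2^{k}-3}}$, and after folding $h_{2(2^{k}-3)}=(-1)^{k}h_{2^{k+1}-3}\equiv -2$. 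The smallest cases $k=2,3$ are in any case vacuous, since then $|h_{2^{k}-3}|=1$. I do not expect a serious conceptual obstacle: the real content is the folding identity and the normalization $D_{n}\equiv 1$, after which each parity case is a short polynomial manipulation. The points demanding care are that the block relations and simplified recurrences for the $a_{n,i}$ are valid only for $n\ge 1$ (not at $n=0$), that the parity sign $(-1)^{k}$ must be carried consistently through $h_{2^{k}-2}=(-1)^{k}Q-R$ and through $t_{2^{k}-1}$, $t_{2^{k+1}-1}$, and that the determinant normalization is $+1$ rather than $-1$, which is pinned down by the base value $D_{1}=1$.
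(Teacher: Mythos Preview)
The paper offers \emph{no} proof of this statement: it appears in Section~\ref{sec5} as an open conjecture, supported only by numerical evidence. So there is nothing to compare against; the question is simply whether your argument is correct. It is, and it resolves the conjecture.

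Your diagnosis of the typo is right and necessary. As printed, the second congruence asserts $h_{10}\equiv -2\pmod*{h_{29}}$ at $n=2$, but $h_{10}=-5$ and $h_{29}=-45$, so this already fails; with the modulus corrected to $h_{2^{2n-1}-3}$ both families sit under the single statement $h_{2(2^{k}-3)}\equiv(-1)^{k}-1\pmod*{h_{2^{k}-3}}$ for $k\ge2$. (A trivial slip: the odd case is $k=2n-1$, not $k=2n+1$, to match the paper's indexing.)

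Each ingredient checks out. The folding step is exactly Lemma~\ref{lem:tools}(1) with $m=2^{k}-2$, and $s_{2}(2^{k+1}-3)=k$ gives the sign. The expressions $h_{2^{k}-1}=P-Q$, $h_{2^{k}-2}=(-1)^{k}Q-R$, $h_{2^{k}-3}=P-2Q+(-1)^{k}R$ follow from the Section~\ref{sec4} matrices together with $a_{k,3}=(-1)^{k}a_{k,2}$. The determinant normalisation is the one genuinely new lemma you add: with $D_{n}=a_{n,1}a_{n,4}-(-1)^{n}a_{n,2}^{2}$ the simplified recurrences give
\[
D_{n+1}=(a_{n,1}a_{n,4}+(-1)^{n+1}a_{n,2}^{2})^{2}=D_{n}^{2},
\]
and $D_{1}=1$ by direct evaluation of $a_{1}$ from $a_{0}$, so $D_{n}=1$ for all $n\ge1$. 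For $k$ even the identity $h_{2^{k+1}-3}=(P-Q)^{2}-(Q-R)^{2}=(P-R)(P-2Q+R)$ factors through $h_{2^{k}-3}$ directly; for $k$ odd the sum-of-squares theorem gives $h_{2^{k+1}-3}=(P-Q)^{2}+(Q+R)^{2}$, and your two rewritings using $PR+Q^{2}=1$ are correct line by line, yielding $h_{2^{k+1}-3}=2+(P-R)h_{2^{k}-3}$. Folding back finishes both parities. A spot check at $k=5$ confirms the odd case numerically: $h_{61}=h_{31}^{2}+h_{30}^{2}=19^{2}+64^{2}=4457$, and $h_{58}=-h_{61}=-4457\equiv-2\pmod*{45}$ with $h_{29}=-45$.

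In short: you have proved the (corrected) conjecture, using the paper's own Section~\ref{sec4} machinery plus the clean determinant lemma $D_{n+1}=D_{n}^{2}$, which the paper did not record.
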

We also formulate the following

\begin{ques}
Let $m\in\N_{\geq 2}$ and put
$$
V_{m}:=\{h_{n}\pmod*{m}:\;n\in\N\}, \quad I_{m}:=\{0,\ldots, m-1\}.
$$
For which values of $m$ do we have $V_{m}=I_{m}$?
\end{ques}
This is an interesting question and our numerical computation confirms that there are values of $m$ such that $V_{m}\neq I_{m}$. We checked that for $m\leq 100$ there are $m=33, 54, 66, 83, 99$. We believe that the set of such values is infinite and it would be interesting to have a characterization of them.

Through the paper we observed some similarities and differences between the Fibonacci sequence and the sequence $(h_{n})_{n\in\N}$. One can ask the following

\begin{ques}
Is the set
$$
\cal{I}=\{f_{n}:\;n\in\N\}\cap \{h_{n}:\;n\in\N\}
$$
finite?
\end{ques}
We expect that the answer is YES. In fact, we believe that $\cal{I}=\{0, 1, 3, 5\}$.

In Theorem \ref{confra} we proved that if
$$
\frac{|h_{n}|}{|h_{n-1}|}=[a(n);\varepsilon_{1}(n),\ldots,\varepsilon_{k}(n)]$$
then $a(n)\in\{0,1,2,3,4\}$. Note that $a(n)=\lfloor|h_{n}|/|h_{n-1}|\rfloor$. In fact we believe that an even stronger property is true. More precisely, we believe that the following is true.

\begin{conj}
The sequence $(a(n))_{n\in\N}$ is 2-automatic.
\end{conj}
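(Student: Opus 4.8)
The plan is to show that the sequence $(a(n))_{n\geq 5}$, where $a(n)=\lfloor |h_n|/|h_{n-1}|\rfloor$, is obtained by applying a finite-memory (block) transformation to a 2-automatic sequence, and therefore is itself 2-automatic. First I would extract from the proof of Theorem \ref{confra} the precise recursive dictionary it already establishes: for each of the seven sign/parity cases the fraction $|h_n|/|h_{n-1}|$ is expressed as $|h_{n-2}|/|h_{n-1}| - 1$ or $1 + |h_{n-2}|/|h_{n-1}|$ (equivalently $1 + 1/(|h_{n-1}|/|h_{n-2}|)$), i.e. $a(n)$ is determined by $a(n-1)$ (or $a(n-2)$) together with a bounded amount of local sign data. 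The key point is that the whole tangle of conditions appearing in that proof --- the signs of $h_n,h_{n-1},h_{n-2}$, the values $t_n,t_{n-1},t_{n-2}$, the parity of $n$ --- is a function of a bounded window of the 2-automatic sequences ${\bf u}$ (signs, Lemma \ref{lem:tools}(3)) and ${\bf t}$ (the PTM sequence), hence is itself 2-automatic by closure of 2-automatic sequences under finite products and shifts.

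The main body of the argument I would organize as follows. Step one: define an auxiliary sequence $w_n$ recording a bounded packet of information at index $n$, say $w_n = (a(n), \operatorname{sign}(h_n), \operatorname{sign}(h_{n-1}), \operatorname{sign}(h_{n-2}), t_n, t_{n-1}, t_{n-2}, n \bmod 2)$, taking values in a fixed finite set. Step two: show that $w_n$ is determined by $w_{n-1}$ and $w_{n-2}$ together with $(\operatorname{sign}(h_n), t_n, n\bmod 2)$ --- this is exactly the content of the case analysis in Theorem \ref{confra}, once one checks that $a(n-1)$ (or $a(n-2)$) really does determine $a(n)$ via the operations "$x\mapsto x-1$ on a reciprocal" and "$x \mapsto 1+1/x$", which in continued-fraction terms are finite-state operations on the digit string. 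Here I would invoke the standard fact that prepending or deleting a leading partial quotient, or the map $x\mapsto 1+1/x$, changes the continued fraction expansion in a way that alters only its head; since we only ever read off $a(n)=$ the leading digit, and since the proof of Theorem \ref{confra} shows all digits stay in $\{1,2,3\}$ with $a(n)\in\{0,\dots,4\}$, the relevant state space of "heads" is finite. Step three: the triples $(\operatorname{sign}(h_n), t_n, n\bmod 2)$ form a 2-automatic sequence (by Lemma \ref{lem:tools}(3) and 2-automaticity of ${\bf t}$ and of $n\bmod 2$, using closure under finite direct products). A sequence defined by a fixed finite-state recursion driven by a 2-automatic input is again 2-automatic --- one can see this via the kernel: the 2-kernel of $(w_n)$ is contained in the (finite) set of sequences obtained by running the recursion from each of the finitely many possible "initial window" states over each of the finitely many kernel elements of the driving sequence. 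Finally, $a(n)$ is the first coordinate of $w_n$, a coordinate projection, so $(a(n))$ is 2-automatic, and one patches the finitely many indices $n<5$ by hand.

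The hard part, and the reason this is only a conjecture in the paper, is Step two: verifying that $a(n)$ is genuinely a finite-state function of the preceding data. The subtlety is that the recursion relating $|h_n|/|h_{n-1}|$ to an earlier ratio sometimes goes back two steps and is only valid under sign hypotheses that themselves need the inductive hypothesis of Theorem \ref{confra} to exclude a leading digit $4$; one must check that the resulting "digit head" automaton is genuinely finite and well-defined, with no hidden unbounded carry. Concretely, the operation $x\mapsto |h_{n-2}|/|h_{n-1}|-1$ requires knowing that $|h_{n-1}|/|h_{n-2}|>1/4$ and $<1$ so that the expansion just loses its leading $0$ and the new leading digit is $1$ over an old tail --- so one must track not just $a(n-1)$ but enough of the head of the expansion (realistically the first two or three partial quotients, still a bounded amount) to know what the next leading digit becomes. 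I would expect that tracking the pair (first partial quotient, second partial quotient) suffices, and the bulk of the write-up is a careful but finite bookkeeping of how these pairs transform under the seven cases; once that table is in hand the automaticity conclusion is immediate from the kernel argument above.
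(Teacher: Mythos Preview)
The statement you are proving is listed in the paper as a \emph{conjecture}; the authors give no proof, so there is nothing in the paper to compare your argument against. What matters, then, is whether your outline actually closes the gap. It does not.

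The genuine obstacle is exactly the one you flag as ``the hard part'' in Step two, and your suggested fix --- tracking only the first one or two partial quotients --- does not work. Look at what the operations from the case analysis do to the continued fraction of $x_{n}=|h_n|/|h_{n-1}|$. In cases 1--3 one has $x_n = 1/x_{n-1}-1$ with $a(n-1)=0$; if $x_{n-1}=[0;\varepsilon_1,\varepsilon_2,\dots]$ then $x_n=[\varepsilon_1-1;\varepsilon_2,\dots]$, so $a(n)$ depends on $\varepsilon_1(n-1)$ and $\varepsilon_1(n)$ depends on $\varepsilon_2(n-1)$. In case 5 (and case 6) with $a(n-1)=0$ one has $x_n=[1+\varepsilon_1;\varepsilon_2,\dots]$, again shifting left. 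Only the ``prepend'' variant of $x_n=1+1/x_{n-1}$ (when $a(n-1)\geq 1$) gains a digit rather than consuming one. Thus the amount of head data needed to determine $a(n)$ is governed by the running difference between ``consume'' steps and ``prepend'' steps along the trajectory, and nothing in Theorem~\ref{confra} bounds this difference; the theorem only bounds the \emph{values} of the partial quotients, not the depth of look-ahead required. Tracking $(a,\varepsilon_1)$ is demonstrably insufficient, and the same cascade shows that any fixed $k$-tuple of leading digits can be exhausted by $k$ consecutive shift-type steps. Until you prove that such runs are uniformly bounded --- which is a statement about the 2-automatic control sequence $(\operatorname{sign}(h_n),t_n)$ and is not obvious --- the finite-state recursion you need does not exist.

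A secondary issue: your kernel justification for ``finite-state recursion over a 2-automatic input is 2-automatic'' is not correct as written. The subsequence $(w_{2^in+j})_n$ does \emph{not} satisfy the same one-step recursion over a single kernel element of the driving sequence; for example $w_{2n}$ depends on $w_{2n-1}$, which is in a different kernel coset. The correct statement is the closure of $k$-automatic sequences under $1$-uniform finite-state transduction (Allouche--Shallit, Theorem~6.9.2), whose proof goes through the automaton, not the kernel. This is repairable, but only matters once Step two is actually established.
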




Our next conjecture is related to the behavior of the sequence of the fractions $|h_{n}|/|h_{n-1}|$ along subsequences of the form $2^{k}n+i, i=0,\ldots, 2^{k}-1$. The pictures presented below strongly suggest the following:

\begin{figure}[h]
       \centering
         \includegraphics[width=5in]{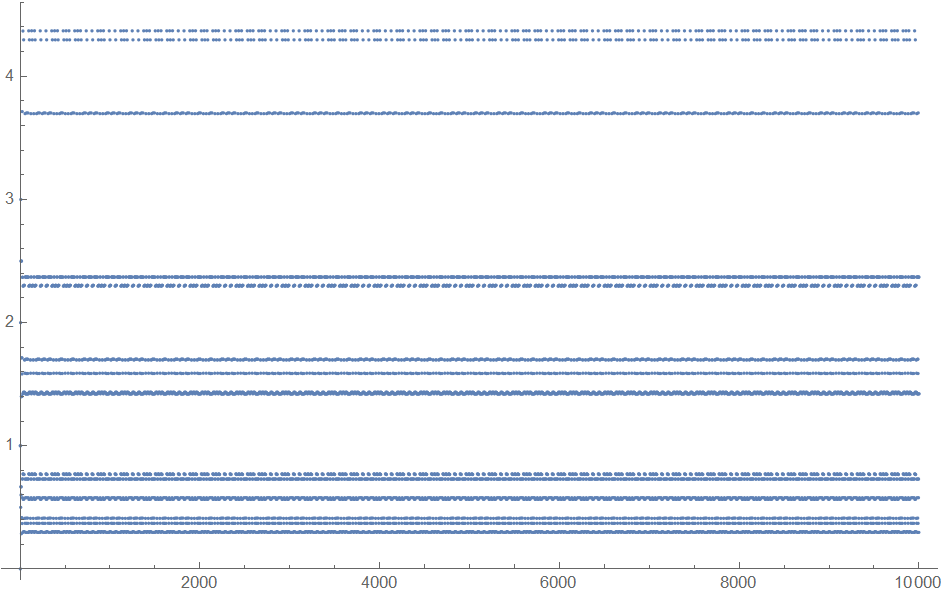}
        \caption{The plot of the sequence $|h_{n}|/|h_{n-1}|, n\leq 10^4$}
       \label{fraction_1}
    \end{figure}

\begin{figure}[h]
       \centering
         \includegraphics[width=5in]{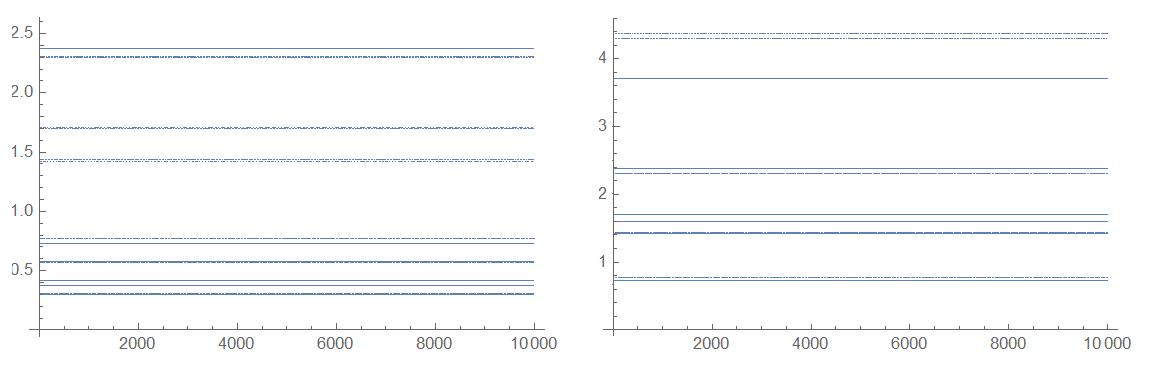}
        \caption{The plot of the sequence $|h_{2n+i+1}|/|h_{2n+i}|, n\leq 10^4$ for $i=0$ (left) and $i=1$ (right)}
       \label{fraction_2}
    \end{figure}

\begin{figure}[h]
       \centering
         \includegraphics[width=5in]{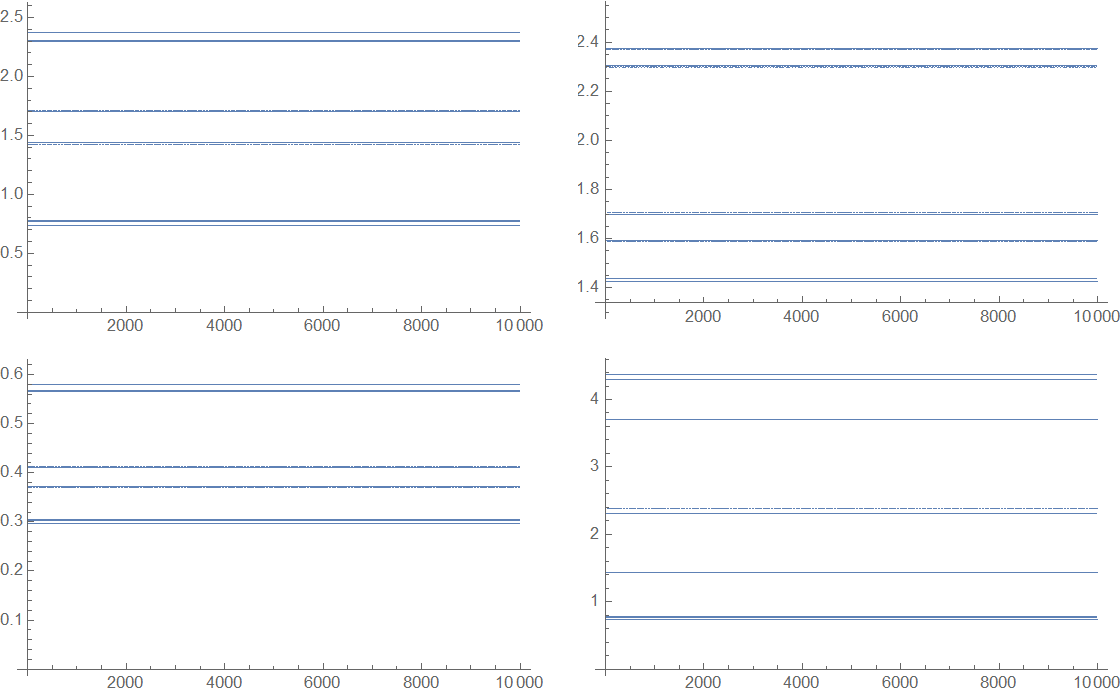}
        \caption{The plot of the sequence $|h_{4n+i+1}|/|h_{4n+i}|, n\leq 10^4$ for $i=0$ (upper left), $i=1$ (upper right), $i=2$ (lower left), $i=3$ (lower right)}
       \label{fraction_4}
    \end{figure}

\begin{conj}
For $n\in \N_{\geq 4}$ we have $|h_{n}|\geq \frac{2}{7}|h_{n-1}|$. It seems that more is true. More precisely, for any $k\in\N$ and $i\in\{0,\ldots, 2^{k}-1\}$ there is a constant $C_{k,i}\in\Q$ such that
$$
|h_{2^{k}n+i+1}|\geq C_{k,i}|h_{2^{k}n+i}|
$$
for each $n\in\N$. In particular, we have $C_{2k,0}=C_{2k+1,0}$ for each $k$. In particular, we have the equalities
\begin{align*}
C_{0,0}&=C_{1,0}=\frac{2}{7},\\
C_{2,0}&=C_{3,0}=\frac{2}{3},\\
C_{4,0}&=C_{5,0}=\frac{64}{83},\\
C_{6,0}&=C_{7,0}=\frac{52071130}{67519091},
\end{align*}
and for each $k\in\N_{+}$ we have
$$
\liminf_{n\rightarrow +\infty}\frac{|h_{2^{k}n+i+1}|}{|h_{2^{k}n+i}|}=C_{k,i}.
$$
\end{conj}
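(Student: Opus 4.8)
The plan is to rewrite everything in terms of products of integer $2\times 2$ matrices driven by the Prouhet--Thue--Morse sequence, and then to read off the liminf from the resulting M\"obius dynamics. Extend the sequence by $h_{-1}=1$, $h_{-2}=-1$ and put $M(s)=\left(\begin{smallmatrix}s&1\\1&0\end{smallmatrix}\right)$, so that $(h_{m},h_{m-1})^{T}=M(t_{m})(h_{m-1},h_{m-2})^{T}$. Two elementary facts drive the computation: $t_{2^{k}n+j}=t_{n}t_{j}$ for $0\le j<2^{k}$, and $M(t_{n}t_{j})=t_{n}D_{t_{n}}M(t_{j})D_{t_{n}}$ with $D_{c}=\op{diag}(c,1)$ (using $t_{n}^{2}=1$); telescoping a block of $2^{k}$ consecutive steps therefore absorbs all of the $t_{n}$-dependence into a conjugation by $D_{t_{n}}$. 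Carrying this out one obtains, for fixed $k$ and $0\le i<2^{k}-1$ (the boundary value $i=2^{k}-1$ being a block-boundary ratio handled the same way),
\[
\frac{h_{2^{k}n+i+1}}{h_{2^{k}n+i}}=t_{n}\cdot\mu_{i}\bigl(t_{n}\sigma_{n}\bigr),\qquad \sigma_{n}:=\frac{h_{2^{k}n-1}}{h_{2^{k}n-2}},
\]
where $\mu_{i}$ is the M\"obius map of the fixed integer matrix $C_{i}=\prod_{j=i+1}^{0}M(t_{j})$, while the block-boundary ratios obey the Thue--Morse driven recursion $\sigma_{n}=t_{n-1}\,\psi_{k}\bigl(t_{n-1}\sigma_{n-1}\bigr)$, $\psi_{k}$ being the M\"obius map of the full block $W_{k}=\prod_{j=2^{k}-1}^{0}M(t_{j})$. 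Thus $(\sigma_{n})_{n}$ is an orbit of the two-map iterated function system $\{z\mapsto\psi_{k}(z),\,z\mapsto-\psi_{k}(-z)\}$ whose choice at step $n$ is $t_{n-1}$, and every ratio appearing in the conjecture is a \emph{fixed} M\"obius image of this orbit; by Theorem~\ref{confra} the orbit stays in the bounded region $|\sigma_{n}|<5$.

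For the bare inequality $|h_{n}|\ge\tfrac{2}{7}|h_{n-1}|$ (the case $k=0$) I would sharpen Theorem~\ref{confra} instead of using the dynamics. By that theorem the regular continued fraction of $|h_{n}/h_{n-1}|$ is $[a(n);\varepsilon_{1},\varepsilon_{2},\dots]$ with $a(n)\in\{0,1,2,3,4\}$ and each $\varepsilon_{j}\in\{1,2,3\}$, so $|h_{n}/h_{n-1}|<\tfrac{2}{7}$ can occur only when $a(n)=0$, $\varepsilon_{1}=3$ and $\varepsilon_{2}=1$, i.e. the expansion begins $[0;3,1,\dots]$. In the corresponding case of the seven-case induction proving Theorem~\ref{confra} one has $|h_{n}/h_{n-1}|=1/(1+|h_{n-2}/h_{n-3}|)$, and the forbidden pattern forces $|h_{n-2}/h_{n-3}|>\tfrac{5}{2}$, which unwinds into a statement about $a(n-2)$ and $\varepsilon_{2}(n-2)$, i.e. a genuine recursion. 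I would therefore strengthen the induction hypothesis to track also the first one or two partial quotients of $|h_{n}/h_{n-1}|$ as a finite-state datum and verify that $[0;3,1,\dots]$ never occurs. Sharpness is witnessed at $n=14$, where $(h_{14},h_{15})=(-7,-2)$ and $|h_{15}/h_{14}|=\tfrac{2}{7}$; since consecutive terms are coprime ($\gcd(h_{n},h_{n-1})=\gcd(h_{1},h_{0})=1$), the value $\tfrac{2}{7}$ forces $(|h_{n}|,|h_{n+1}|)=(7,2)$ exactly.

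For a general pair $(k,i)$ the conjecture reduces, via the first paragraph, to computing $\inf_{n}\bigl|\mu_{i}(t_{n}\sigma_{n})\bigr|$ and the corresponding liminf along the Thue--Morse orbit. The key input is that the Prouhet--Thue--Morse sequence is linearly recurrent, so every finite factor reappears with bounded gaps; consequently the orbit closure $K=\overline{\{(\sigma_{n},t_{n}):n\ge n_{0}\}}$ has the property that each of its points is an accumulation point of the orbit, and hence, using that $\mu_{i}$ is continuous on $K$ (its pole corresponds to the impossible equality $h_{2^{k}n+i}=0$),
\[
\liminf_{n\to\infty}\bigl|\mu_{i}(t_{n}\sigma_{n})\bigr|=\min_{(s,c)\in K}\bigl|\mu_{i}(cs)\bigr|.
\]
One is thus left to minimise a fixed M\"obius functional over $K$. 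Since M\"obius maps are monotone away from their pole, the extremes over the attractor of the two-map system are attained at structured driving itineraries, which are finitely many and explicitly listable once $\psi_{k}$ and $\mu_{i}$ are written down; this makes each $C_{k,i}$ computable in principle. The relation $C_{2k,0}=C_{2k+1,0}$ I would obtain by noting that $2^{2k+1}n=2^{2k}\cdot 2n$, so $\{|h_{2^{2k+1}n+1}/h_{2^{2k+1}n}|:n\in\N\}$ is exactly the subfamily of $\{|h_{2^{2k}m+1}/h_{2^{2k}m}|:m\in\N\}$ with $m$ even; this gives $C_{2k+1,0}\ge C_{2k,0}$ at once, and the reverse inequality amounts to the extremal index for level $2k$ being realisable at an even value of $m$.

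The hard part --- and the reason this is stated as a conjecture --- is the rationality of $C_{k,i}$ and the exact closed forms. A genuinely periodic driving itinerary produces a quadratic-irrational value of $\sigma$, whose M\"obius image is again quadratic irrational, not the rationals $\tfrac{2}{7},\tfrac{2}{3},\tfrac{64}{83},\dots$ that one observes; the resolution must be that the infimum is in fact attained at a concrete small index $n_{0}=n_{0}(k,i)$, so $C_{k,i}=|h_{2^{k}n_{0}+i+1}/h_{2^{k}n_{0}+i}|\in\Q$, and that this value is never beaten afterwards. Establishing such a global, non-asymptotic lower bound on the Thue--Morse driven orbit $(\sigma_{n})$ is the delicate point, and it is essentially equivalent to the conjectured $2$-automaticity of $(a(n))_{n}=(\lfloor|h_{n}/h_{n-1}|\rfloor)_{n}$. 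One also needs, as an input, that $|h_{n}|\to\infty$, so that every rational value occurs only finitely often and the infimum is a minimum approached from above; I expect this to follow from the identity $h_{2^{2k+2}-3}=h_{2^{2k+1}-1}^{2}+h_{2^{2k+1}-2}^{2}$ together with the $k=0$ lower bound of the second paragraph, but it deserves to be isolated as a lemma. In short, the matrix/M\"obius reformulation and the uniform recurrence of the Thue--Morse sequence reduce the conjecture to a finite but intricate optimisation over periodic itineraries of a two-map M\"obius iterated function system, and the crux is to show that this optimum is realised at an explicit finite index --- where either a new structural idea or a careful automaton computation (of the kind used above for $(h_{n}\bmod m)$) seems to be required.
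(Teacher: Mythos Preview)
The statement you are addressing is a \emph{conjecture} in the paper (it appears in Section~\ref{sec5}, ``Questions and conjectures''), and the paper offers no proof whatsoever --- only numerical evidence and the plots in Figures~\ref{fraction_1}--\ref{fraction_4}. There is therefore nothing in the paper to compare your attempt against.

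Your write-up is not a proof either, and you are candid about this: you correctly isolate the crux (showing that the infimum over the Thue--Morse driven orbit is actually attained at a finite index, which is what would force $C_{k,i}\in\Q$) and you explicitly flag it as unresolved. The matrix/M\"obius reformulation you set up is sound --- the identity $t_{2^{k}n+j}=t_{n}t_{j}$ for $0\le j<2^{k}$ and the conjugation $M(t_{n}t_{j})=t_{n}D_{t_{n}}M(t_{j})D_{t_{n}}$ are both correct and do telescope the block the way you claim --- and reducing the liminf to a minimum over the orbit closure via linear recurrence of the PTM sequence is a legitimate move. But several of the intermediate steps are themselves only sketched: the strengthened induction you propose for the $k=0$ bound $|h_{n}|\ge\tfrac{2}{7}|h_{n-1}|$ (ruling out continued fractions beginning $[0;3,1,\ldots]$) would need to be carried out in full, your argument for $C_{2k,0}=C_{2k+1,0}$ gives only one inequality, and the claim that $|h_{n}|\to\infty$ is asserted rather than proved. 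So what you have is a coherent and plausible programme, not a proof; the paper's authors, for their part, do not even attempt one.
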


\bigskip

\begin{ques}
What is the true order of magnitude of $|h_{n}|$ as $n\rightarrow +\infty$? More precisely, does the limit $\lim_{n\rightarrow +\infty}|h_{n}|^{\frac{1}{n}}$ exist?
\end{ques}

We believe that the limit of the sequence $(|h_{n}|^{\frac{1}{n}})_{n\in\N_{+}}$ exists and is equal to $g\approx 1.152$. The conjectural value of $g$ follows from our computations of the first $10^{5}$ terms.

\bigskip

We studied Fibonacci-like sequence twisted by the PTM sequence which is 2-automatic. In particular, we proved that the sequence of signs of $(h_{n})_{n\in\N}$ is 2-automatic (and it is not periodic). This suggests the following general

\begin{ques}
Let $(a_{n})_{n\in\N}$ be an automatic sequence and consider the sequence $(r_{n})_{n\in\N}$ satisfying $r_{0}=0, r_{1}=1$ and $r_{n}=a_{n}r_{n-1}+r_{n-2}, n\geq 2$. Is the sequence $(\op{sign}(r_{n}))_{n\in\N}$ automatic?
\end{ques}

Our expectation is that the answer to the question above is positive. We performed some experiments with some automatic sequences and in each case the sequence of signs was indeed automatic. For example, let us take $m\in\N_{\geq 2}$ and consider the sequence
$$a_{n}(m)=\begin{cases}
+1, & n\;\mbox{is a power of}\;m,  \\
-1, & \mbox{otherwise},
\end{cases}$$
i.e., the sequence $(a_{n}(m))_{n\in\N}$ can be seen as a characteristic sequence (where the usual value of 0 is replaced by $-1$) of the set of powers of $m$. If $r_{0}(m)=0, r_{1}(m)=1$ and for $n\geq 2$ we have $r_{n}(m)=a_{m}(n)r_{n-1}(m)+r_{n-2}(m)$, then one can prove that for $n\geq 2m+1$ we have
$$
\op{sign}(r_{n}(m))=\begin{cases}               (-1)^{n+1}a_{n}(2), & m=2, \\
(-1)^{n}a_{n}(m)  , & m>2.
\end{cases}
$$
In particular, for each $m\in\N_{\geq 2}$ the sequence $(\op{sign}(r_{n}(m)))_{n\in\N}$ is $m$-automatic.

\section{Acknowledgements} 

Work of both authors was supported by a grant of the National Science Centre (NCN) No.\ UMO-2019/34/E/ST1/00094.

Thanks to Agnieszka Dutka for proofreading and {\LaTeX } support.

 \bibliographystyle{elsarticle-num} 
 \bibliography{cas-refs}

\end{document}